\documentclass[11pt]{article}
\usepackage{color}

\usepackage{amssymb}   
\usepackage{amsthm}    
\usepackage{amsmath}   
\usepackage{stmaryrd}  
\usepackage{titletoc}  
\usepackage{mathrsfs}  
\usepackage{graphicx}

\vfuzz2pt 
\hfuzz2pt 

\newlength{\defbaselineskip}
\setlength{\defbaselineskip}{\baselineskip}
\newcommand{\setlinespacing}[1]%
           {\setlength{\baselineskip}{#1 \defbaselineskip}}

\theoremstyle{plain}
\newtheorem{thm}{Theorem}[section]

\newtheorem{lem}[thm]{Lemma}
\newtheorem{prop}[thm]{Proposition}
\newtheorem{exam}[thm]{Example}

\theoremstyle{definition}
\newtheorem{defn}{Definition}[section]

\newtheorem{rmk}{Remark}[section]

\newcommand{\eps}{\varepsilon}

\DeclareMathOperator*{\esssup}{esssup}
\DeclareMathOperator*{\essinf}{essinf}

\newcommand{\cL}{\mathcal{L}}
\newcommand{\cT}{\mathcal{T}}
\newcommand{\cM}{\mathcal{M}}
\newcommand{\cB}{\mathcal{B}}
\newcommand{\cA}{\mathcal{A}}
\newcommand{\cS}{\mathcal{S}}

\newcommand{\cG}{\mathcal{G}}

\newcommand{\cU}{\mathcal{U}}

\newcommand{\bH}{\mathbb{H}}
\newcommand{\bP}{\mathbb{P}}
\newcommand{\bR}{\mathbb{R}}
\newcommand{\bN}{\mathbb{N}}

\newcommand{\sF}{\mathscr{F}}
\newcommand{\sP}{\mathscr{P}}

\textwidth =16cm \topmargin =-18mm \textheight =23.5cm \oddsidemargin=5pt
\evensidemargin=0pt

\makeatletter\@addtoreset{equation}{section} \makeatother
 \allowdisplaybreaks
\begin{document}

\title{Controlled Ordinary Differential Equations with Random Path-Dependent Coefficients and Stochastic Path-Dependent Hamilton-Jacobi Equations\footnotemark[1] 
}

\author{Jinniao Qiu\footnotemark[2] 
}
\footnotetext[1]{This work was partially supported by the National Science and Engineering Research Council of Canada (NSERC) and by the start-up funds from the University of Calgary. }
\footnotetext[2]{Department of Mathematics \& Statistics, University of Calgary, 2500 University Drive NW, Calgary, AB T2N 1N4, Canada. \textit{E-mail}: \texttt{jinniao.qiu@ucalgary.ca} (J. Qiu)
.}
%
%

\maketitle

\begin{abstract}
This paper is devoted to the stochastic optimal control problem of ordinary differential equations allowing for both path-dependence and measurable randomness. As opposed to the deterministic path-dependent cases, the value function turns out to be a random field on the path space and it is characterized by a stochastic path-dependent Hamilton-Jacobi (SPHJ) equation. A notion of viscosity solution is proposed and the value function is proved to be the unique viscosity solution to the associated SPHJ equation.
\end{abstract}

{\bf Mathematics Subject Classification (2010):}  49L20, 49L25, 93E20, 35D40, 60H15

{\bf Keywords:} stochastic path-dependent Hamilton-Jacobi equation, stochastic optimal control, viscosity solution, backward stochastic partial differential equation

\section{Introduction}
Let $(\Omega,\sF,\{\sF_t\}_{t\geq0},\bP)$ be a complete filtered probability space. The filtration $\{\sF_t\}_{t\geq0}$ satisfies the usual conditions and is generated by an $m$-dimensional Wiener process $W=\{W(t):t\in[0,\infty)\}$ together with all the $\bP$-null sets in $\sF$. The associated predictable $\sigma$-algebra  on $\Omega\times[0,T]$ is denoted by  $\sP$. 

Throughout this work,  the number $T\in (0,\infty)$ denotes a fixed deterministic terminal time and the set $C([0,T];\bR^d)$ represents the space of $\bR^d$-valued continuous functions on $[0,T]$.  For each $x\in C([0,T];\bR^d)$, denote by $x_t$ its restriction to time interval $[0,t]$ for each $t\in [0,T]$ and by $x(t)$ its value in $\bR^d$ at time $t\in[0,T]$. Consider the following stochastic optimal control problem
\begin{align}
\min_{\theta\in\cU}E\left[\int_0^T\!\! f(s,X_s,\theta(s))\,ds +G(X_T) \right], \label{Control-probm}
\end{align}
subject to
\begin{equation}\label{state-proces-contrl}
\left\{
\begin{split}
&\frac{dX(t)}{dt}=\beta(t,X_t,\theta(t)),\,\,
\,t\geq 0; \\
& X_0=x_0\in\bR^d.
\end{split}
\right.
\end{equation}
Here, we denote by $\cU$ the set of all the $U$-valued and $\sF_t$-adapted processes with $U\subset \bR^{\bar m}$ ($\bar m\in\bN^+$) being a nonempty set. The process $(X(t))_{t\in[0,T]}$ is the {\sl state process}, governed by the {\sl control} $\theta\in\cU$. We may write $X^{r,x_r;\theta}(t)$ for $0\leq r\leq t\leq T$ to indicate the dependence of the state process on the control $\theta$, the initial time $r$ and initial path $x_r$.  

In this paper, we consider the non-Markovian cases where the coefficients $\beta,f$, and $G$ may depend not only on time and control but also \textit{explicitly} on $\omega \in\Omega$ and paths/history of the state process. 
Inspired by the (Markovian) counterparts in \cite{ekren2016viscosity-1,lukoyanov2007viscosity},  such problems may arise naturally from controlled ordinary differential equations allowing for path-dependent and random coefficients. Two interesting examples are sketched as follows.
\begin{exam}
In terms of the optimization \eqref{Control-probm}-\eqref{state-proces-contrl}, we may study the control problem of some general stochastic equations:
\begin{align}
\min_{\theta\in\cU}E\left[\int_0^T\!\!  \tilde f(s,\tilde X_s,\theta(s))\,ds + \tilde G(\tilde X_T) \right] \label{Control-probm-exam}
\end{align}
subject to
\begin{equation}\label{state-proces-contrl-exam}
 {\tilde X(t)}=  \tilde x_0 + \int_0^t \tilde \beta(s,\tilde X_s,\theta(s))\,ds+\eta(t),\,\,
\,t\geq 0, \tilde X_0= x_0\in\bR^d,
\end{equation}
where the new term $\left( \eta(t)\right)_{t\geq 0}$ may be any general $\sF_t$-adapted stochastic process with or without rough paths, including Wiener processes, fractional Brownian motions, and associated integrals, etc.
Set $X(t)=\tilde X(t)-\eta(t)$, for $t\in[0,T]$. The control problem \eqref{Control-probm-exam}-\eqref{state-proces-contrl-exam} may be written equivalently as \eqref{Control-probm}-\eqref{state-proces-contrl}, while the associated coefficients $(f,\beta)(t, X_t,\theta(t))=(\tilde f,\tilde \beta)(s, (X+\eta)_s,\theta(s))$ and $G(X_T)=\tilde G((X+\eta)_T)$ may of course be random. 
\end{exam}
\begin{exam}[Optimal consumption with habit formation] ~\\
Adopting the von Neumann-Morgenstern preferences over time interval $[0,T]$, the utility maximization with habit formation may be generally formulated as an optimal stochastic control problem like \eqref{Control-probm}-\eqref{state-proces-contrl}:
\begin{align*}
\max_{c\in \tilde \cA} E \int_0^T u(t,c(t),Z(t,C_t))\,dt
\end{align*}
subject to
\begin{equation*} 
\left\{
\begin{split}
dC(t) &=  c(t)\,dt, \quad t\in [0,T]; \quad C(0)=\kappa;\\
Z(t,C_t)&=\gamma(t)+\int_0^t\xi(t,s)\,dC(s), \quad t\in[0,T],
\end{split}
\right.
\end{equation*}
where we denote by $(c(t))_{t\in [0,T]}$ the consumption process with $C(t)$ the cumulative consumption until time $t$ and the process $Z(t,C_t)$  represents the standard of living with random coefficients $\gamma(t)$ and $\xi(t,s)$ for $0\leq s\leq t\leq T$. The admissible control set $\tilde \cA$ may be complicated by incorporating the budget constraints for different market models; refer to \cite{detemple1991asset,EnglezosKaratzas09,yu2017optimal} among many others.
\end{exam}

Coming back to the control problem \eqref{Control-probm}-\eqref{state-proces-contrl}, we define the dynamic cost functional:\begin{align}
J(t,x_t;\theta)=E_{\sF_t}\left[\int_t^T\!\! f(s,X^{t,x_t;\theta}_s,\theta(s))\,ds +G(X^{t,x_t;\theta}_T) \right],\ \ t\in[0,T], \label{eq-cost-funct}
\end{align}
and the value function is given by
\begin{align}
V(t,x_t)=\essinf_{\theta\in\cU}J(t,x_t;\theta),\quad t\in[0,T].
\label{eq-value-func}
\end{align}
Here and throughout this paper, we denote by $E_{\sF_t}[\,\cdot\,]$ the conditional expectation with respect to $\sF_t$. Due to the randomness and path-dependence of the coefficient(s), the value function $V(t,x_t)$ is generally a function of time $t$, path $x_t$, and $\omega\in\Omega$. In fact, the value function $V$ is proved to be the unique viscosity solution to the following stochastic path-dependent Hamilton-Jacobi (SPHJ) equation:
\begin{equation}\label{SHJB}
  \left\{\begin{array}{l}
  \begin{split}
  -\mathfrak{d}_t u(t,x_t)
- \mathbb{H}(t,x_t,\nabla u(t,x_t) )&=0,\quad (t,x)\in [0,T)\times  C([0,T];\bR^d);\\
    u(T,x)&= G(x), \quad x\in   C([0,T];\bR^d),
    \end{split}
  \end{array}\right.
\end{equation}
with 
\begin{align*}
\mathbb{H}(t,x_t,p)
= \inf_{v\in U} \bigg\{
       \beta'(t,x_t,v)p +f(t,x_t,v)
                \bigg\},\quad \text{for } p\in \bR^d,  
\end{align*}
  where $\nabla u(t,x_t)$ denotes the the vertical derivative of $u(t,x_t)$ at the path $x_t$ (see Definition \ref{defn-VD}) and  the unknown \textit{adapted} random field $u$ is confined to the following form:
  \begin{align}
u(t,x_t)=u(T,x_{t,T-t})-\int_{t}^T\mathfrak{d}_{s} u(s,x_{t,s-t})\,ds-\int_t^T\mathfrak{d}_{w}u(s,x_{t,s-t})\,dW(s), \label{SDE-u}
\end{align}
where $x_{t,r-t}(s)=x_t(s) {1}_{[0,t)}(s) + x_t(t){1}_{[t,r]}(s)$ for $0\leq t\leq s\leq r\leq T$.
The Doob-Meyer decomposition theorem indicates the uniqueness of the pair $(\mathfrak{d}_tu,\,\mathfrak{d}_{\omega}u)$ and thus the linear operators $\mathfrak{d}_t$ and $\mathfrak{d}_{\omega}$ may be well defined in certain spaces (see Definition \ref{defn-testfunc}).  The pair $(\mathfrak{d}_tu,\,\mathfrak{d}_{\omega}u)$ may also be defined as two differential operators; see \cite[Section 5.2]{cont2013-founctional-aop} and \cite[Theorem 4.3]{Leao-etal-2018} for instance. By comparing \eqref{SHJB} and \eqref{SDE-u}, we may rewrite the SPHJ equation \eqref{SHJB} formally as a path-dependent backward stochastic partial differential equation (BSPDE): 
\begin{equation}\label{SHJB-equiv}
  \left\{\begin{array}{l}
  \begin{split}
  -du(t,x_t)=\,& 
 \mathbb{H}(t,x_t,\nabla u(t,x_t)) 
 \,dt -\psi(t,x_t)\, dW({t}), \quad
                     (t,x)\in [0,T)\times  C([0,T];\bR^d);\\
    u(T,x)=\, &G(x), \quad x\in C([0,T];\bR^d),
    \end{split}
  \end{array}\right.
\end{equation}
where the pair $(u,\psi)=(u,\mathfrak{d}_{w}u)$ is unknown. 

When all the coefficients are \textit{deterministic} path-dependent functions, the control problem is first studied in \cite{lukoyanov2007viscosity} and  the viscosity solution theory of associated \textit{deterministic} path-dependent Hamilton-Jacobi equations may be found in \cite{ekren2016viscosity-1,lukoyanov2007viscosity}; for the theory of general deterministic path-dependent PDEs, see \cite{bayraktar2018path,cont2013-founctional-aop,cosso2018path,peng2016bsde,ren2017comparison} among many others. When all the coefficients are just possibly random but with \textit{state}-dependence, the resulting Hamilton-Jacobi equation is just a BSPDE (see \cite{bender2016-first-order,qiu2017viscosity,qiu2019uniqueness}); for related research on general BSPDEs, we refer to \cite{Bayraktar-Qiu_2017,cardaliaguet2015master,DuTangZhang-2013,Hu_Ma_Yong02,Peng_92} to mention but a few. When the coefficients are \textit{both} random and path-dependent, some discussions may be found in \cite{ekren2016viscosity-1,ekren2016pseudo,ren2017comparison} where, nevertheless, all the coefficients are required to be continuous and deterministic in $\omega\in  \Omega$ and the resulting value function satisfies, instead, a deterministic path-dependet semilinear parabolic PDE.  In the present work, all the involved coefficients are just measurable w.r.t. $\omega\in \Omega$ without any topology specified on $\Omega$, which allows the general random variables to appear in the coefficients, and in this setting, the control problem \eqref{Control-probm}-\eqref{state-proces-contrl} and associated nonlinear SPHJ equation \eqref{SHJB}, to the best of our knowledge, have never been studied in the literature. In this paper, we are the first to use the dynamic programming method to deal with the control problem \eqref{Control-probm}-\eqref{state-proces-contrl} allowing for both path-dependence and \textit{measurable} randomness; a notion of viscosity solution is proposed and the existence and uniqueness of viscosity solution is proved under standard Lipschitz conditions.

The main feature of viscosity solution for SPHJ equation \eqref{SHJB} is twofold. On the one hand, due to the path-dependence of the coefficients, the solution $u$ is path-wisely defined on the path space $C([0,T];\bR^d)$, while the lack of local compactness of the path space prompts us to define the random test functions in certain compact subspaces via optimal stopping times, which is different from the capacity and nonlinear expectation techniques for deterministic path-dependent PDEs (see \cite{ekren2014viscosity,ekren2016viscosity-1,ren2014overview} for instance). On the other hand,  as the involved coefficients are  just measurable w.r.t. $\omega$ on the sample space $(\Omega,\sF)$ without any equipped topology,  it is not appropriate to define the viscosity solutions in a pointwise manner w.r.t. $\omega\in (\Omega,\sF)$; rather, we use a class of random fields of form \eqref{SDE-u} having sufficient spacial regularity as test functions; at each point $(\tau,\xi)$ ($\tau$ may be stopping time and $\xi$ a $C([0,\tau];\bR^d)$-valued $\sF_{\tau}$-measurable variable) the classes of test functions are also parameterized by $\Omega_{\tau}\in\sF_{\tau}$ and the type of compact subspaces. It is worth noting that due to the  nonanticipativity constraint on the unknown function, the conventional variable-doubling techniques for deterministic Hamilton-Jacobi equations are not applicable in our stochastic setting. Instead, we prove a \textit{weak} comparison principle with comparison conducted on compact subspaces, and then the uniqueness of viscosity solution is proved through regular approximations.

The rest of this paper is organized as follows. In Section 2, we introduce some notations, show the standing assumption on the coefficients, and define the viscosity solution. In Section 3, some auxiliary results including the dynamic programming principle are given in the first subsection and  the value function is verified to be a viscosity soution in the second subsection. In Section 4, a weak comparison theorem is given and then the uniqueness is  proved via approximations. Finally, we give the proof of Theorem \ref{thm-DPP} in the appendix.


\section{Preliminaries and definition of viscosity solution}

\subsection{Preliminaries}


   For each integer $k>0$ and $r\in[0,T]$, let space $\Lambda_r^0(\bR^k):=C([0,r];\bR^k)$ be the set of all $\bR^k$-valued continuous functions on $[0,r]$ and $\Lambda_r(\bR^k):=D([0,r];\bR^k)$  the space of $\bR^k$-valued c\`{a}dl\`ag (right continuous with left limits) functions on $[0,r]$. Define
$$
\Lambda^0(\bR^k)=\cup_{r\in[0,T]} \Lambda^0_r(\bR^k),\quad \Lambda(\bR^k)=\cup_{r\in[0,T]} \Lambda_r(\bR^k).
$$
Throughout this paper, for each path $X\in \Lambda_T(\bR^k)$ and $t\in[0,T]$, denote by $X_t=(X(s))_{0\leq s\leq t}$ its restriction to time interval $[0,t]$,  while using $X(t)$ to represent its value in $\bR^k$ at time $t$, and moreover, when $k=d$, we write simply $\Lambda^0$, $\Lambda_r^0$, $\Lambda$, and $\Lambda_r$.

Both the spaces $\Lambda$ and $\Lambda^0$ are equipped with the following quasi-norm and metric: for each $(x_r,\bar x_t)\in \Lambda_r\times \Lambda_t$ or $(x_r,\bar x_t)\in  \Lambda^0_r\times \Lambda^0_t$ with $0\leq r\leq t\leq T$,
\begin{align*}
\|x_r\|_{0}&= \sup_{s\in[0,r]} |x_r(s)|;
\\
d_0(x_r,\bar x_t)&=\sqrt{|t-r|} + \sup_{s\in [0,t]} \left\{ |x_r(s)-\bar x_t(s)| 1_{[0,r)}(s)+ |x_r(r)-\bar x_t(s)| 1_{[r,t]}(s) \right\}.
\end{align*}
Then $(\Lambda^0_t, \|\cdot\|_0)$ and $(\Lambda_t, \|\cdot\|_0)$ are Banach spaces for each $t\in[0,T]$, while $(\Lambda^0,  d_0)$ and $(\Lambda, d_0)$ are complete metric spaces. In fact, for each $t\in[0,T]$, $(\Lambda^0_t, \|\cdot\|_0)$ and $(\Lambda_t, \|\cdot\|_0)$ can be and (throughout this paper) will be thought of as the complete subspaces of $(\Lambda^0_T, \|\cdot\|_0)$ and $(\Lambda_T, \|\cdot\|_0)$, respectively; indeed, for each $x_t\in \Lambda_t$ ($x_t\in \Lambda_t^0$, respectively), we may define, correspondingly, $\bar x\in\Lambda_T$ ($\bar x\in\Lambda^0_T$, respectively) with $\bar x (s)=x_t(t\wedge s)$ for $s\in[0,T]$ and throughout this work, we do not distinguish between $x$ and $\bar x$. In addition, we shall use $\cB(\Lambda^0)$, $\cB(\Lambda)$, $\cB(\Lambda^0_t)$ and $\cB(\Lambda_t)$ to denote the corresponding Borel $\sigma$-algebras. By contrast, for each $\delta>0$ and $x_r\in\Lambda$, denote by $B_{\delta}(x_r)$ the set of paths $y_t\in\Lambda$ satisfying $d_0(x_r,y_t)\leq \delta$.

 For each $x_t\in \Lambda_t$ and any $h\in\bR^d$, we define its vertical perturbation $x^h_t \in \Lambda_t$ with
 $x_t^h(s)= x_t(s) 1_{[0,t)}(s)+ \left(x_t(t)+h\right)1_{\{t\}}(s)$ for $s\in[0,t]$. 
 \begin{defn}\label{defn-VD}
 Given a functional $\phi$: $\Lambda \rightarrow \bR$ and $x_t\in \Lambda_t$, $\phi$ is said to be vertically differentiable at $x_t$ if the function 
 \begin{align*}
 \phi(x_t^{\cdot}):\ \ 
 &\bR^d\rightarrow \bR,\\
&h\mapsto  \phi(x_t^h)
 \end{align*}
 is differentiable at $0$. The gradient  
 $$
 \nabla \phi(x_t):=(\nabla_1\phi(x_t),\dots, \nabla_d\phi(x_t))' 
 \quad\text{with}\quad 
 \nabla_i \phi(x_t):=\lim_{\delta\rightarrow 0}\frac{\phi(x_t^{\delta e_i}) -\phi(x_t)}{\delta}
 $$
  is called the vertical derivative of $\phi$ at $x_t$, where $\{e_i\}_{i=1,\dots,d}$ is the canonical basis in $\bR^d$.
 \end{defn}

Let $\mathbb B $ be a Banach space equipped with norm $\|\cdot\|_{\mathbb B }$. The continuity of functionals on metric spaces $\Lambda^0$ and $\Lambda$ can be defined in a standard way. Given $x_t\in \Lambda$, we say a map $\phi$: $\Lambda\rightarrow \mathbb B$ is continuous at $x_t$ if for any $\eps >0$ there exists some $\delta >0$ such that for any $\bar x_r\in\Lambda$ satisfying $d_0(\bar x_r,x_t)<\delta$, it holds that $\|\phi(x_t)-\phi(\bar x_r)\|_{\mathbb B}<\eps$.  If the $\mathbb B$-valued functional $\phi$ is continuous and bounded at all $x_t\in\Lambda$, $\phi$ is said to be continuous on $\Lambda$ and denoted by $\phi\in C(\Lambda; \mathbb B)$. 
Similarly, we define $C(\Lambda^0;\mathbb B)$, $C([0,T]\times\Lambda;\mathbb B)$, and $C([0,T]\times\Lambda^0;\mathbb B)$.

Throughout this work, we denote by $(\Omega,\sF,\{\sF_t\}_{t\geq0},\bP)$ a complete filtered probability space. The filtration $\{\sF_t\}_{t\geq0}$ satisfies the usual conditions and is generated by an $m$-dimensional Wiener process $W=\{W(t):t\in[0,\infty)\}$ together with all the $\bP$-null sets in $\sF$. The associated predictable $\sigma$-algebra  on $\Omega\times[0,T]$ is denoted by  $\sP$. 

 For each $t\in[0,T]$, denote by $L^0(\Omega\times\Lambda_t,\sF_t\otimes\cB(\Lambda_t);\mathbb B)$ the space of $\mathbb B$-valued $\sF_t\otimes\cB(\Lambda_t)$-measurable random variables. For each measurable function
 $$
 u: \quad (\Omega\times[0,T]\times \Lambda,\, \sF\otimes\cB([0,T])\otimes \cB(\Lambda)  )   \rightarrow (\mathbb B,\,\cB(\mathbb B)),
 $$
 we say $u$ is \textit{adapted} if for any time $t\in [0,T]$, $u$ is $\sF_t\otimes \cB(\Lambda_t)$-measurable.  
 For $p\in[1,\infty]$, denote by $\cS ^p (\Lambda; {\mathbb B })$ the set of all the adapted functions $u$: $\Omega\times[0,T]\times \Lambda\rightarrow \mathbb B$ such
 that for almost all $\omega\in\Omega$, $u$ is valued in $C([0,T]\times\Lambda;\mathbb B)$
 and
{\small $$\| u\|_{\cS ^p(\Lambda; {\mathbb B })}:= \left\|\sup_{(t,x_t)\in [0,T]\times \Lambda_t} \|u(t,x_t)\|_{\mathbb B }\right\|_{L^p(\Omega,\sF,\bP)}< \infty.$$
}
 For $p\in [1,\infty)$, denote by $\mathcal{L}^p(\Lambda; {\mathbb B })$ the totality of all  the adapted functions $\mathcal X$: $\Omega\times[0,T]\times \Lambda\rightarrow \mathbb B$ such
 that for almost all $(\omega,t)\in\Omega\times[0,T]$, $\mathcal X(t)$ is valued in $C(\Lambda_t;\mathbb B)$, and 
 $$
 \|\mathcal X\|_{\mathcal{L}^p(\Lambda; {\mathbb B })}:=\left\| \bigg(\int_0^T \sup_{x_t\in\Lambda_t}\|\mathcal X(t,x_t)\|_{\mathbb B }^p\,dt\bigg)^{1/p} \right\|_{L^p(\Omega,\sF,\bP)}< \infty.
 $$
Obviously, $(\cS^p(\Lambda; {\mathbb B }),\,\|\cdot\|_{\cS^p(\Lambda; {\mathbb B })})$ and $(\mathcal{L}^p(\Lambda; {\mathbb B }),\|\cdot\|_{\mathcal{L}^p(\Lambda; {\mathbb B })})$
are Banach spaces. In a standard way, we define $L^0(\Omega\times\Lambda^0_t,\sF_t\otimes\cB(\Lambda^0_t);\mathbb B)$, $(\cS^p(\Lambda^0; {\mathbb B }),\,\|\cdot\|_{\cS^p(\Lambda^0; {\mathbb B })})$, and $(\mathcal{L}^p(\Lambda^0; {\mathbb B }),\|\cdot\|_{\mathcal{L}^p(\Lambda^0; {\mathbb B })})$.
 \\

Following is the assumption we use throughout this paper.\\[5pt]
\noindent
   $({\mathcal A} 1)$ \it $G\in L^{\infty}(\Omega,\sF_T;C(\Lambda_T;\bR))$. For the coefficients $g=f,\beta^i$ $(1\leq i \leq d)$, \\
(i) 
 for each $v\in U$, $g(\cdot,\cdot,v)$ is adapted;\\
(ii) for almost all $(\omega,t)\in\Omega\times [0,T]$, $g(t,\cdot,\cdot)$ is continuous on $\Lambda_t\times U$;\\
(iii) there exists $L>0$ such that for all $x,\bar x\in \Lambda_T$, $t\in[0,T]$ and $\gamma_t,\bar\gamma_t\in \Lambda_t$, there hold
\begin{align*}
\esssup_{\omega\in\Omega} |G(x)|+ 
\esssup_{\omega\in\Omega} \sup_{v\in U} |g(t,\gamma_t, v) |
& \leq L ,\\
\esssup_{\omega\in\Omega} |G(x)-G(\bar x)|+ 
\esssup_{\omega\in\Omega} \sup_{v\in U} |g(t,\gamma_t, v)-g(t,\bar \gamma_t,v)|
& \leq L\left(\|x-\bar x\|_0 + \|\gamma_t-\bar\gamma_t\|_0   \right).
\end{align*}
\rm



\subsection{Definition of viscosity solutions}

For $\delta \geq 0$, $x_t\in \Lambda_t$, we define the horizontal extension $x_{t,\delta} \in \Lambda_{t+\delta}$ by setting $x_{t,\delta}(s)=x_t(s\wedge t)$ for all $s\in[0,t+\delta]$.

\begin{defn}\label{defn-testfunc}
For $u\in \cS^{2} (\Lambda;\bR)$ with $\nabla u\in \cL^2(\Lambda;\bR)$, we say $u\in \mathscr C_{\sF}^1$ if \\
(i) there exists $(\mathfrak{d}_tu, \,\mathfrak{d}_{\omega}u)\in \cL^2(\Lambda;\bR)\times  \cL^2(\Lambda;\bR^m)$ such that
for all $0\leq r \leq \tau \leq T$, and $x_r\in \Lambda_r$, it holds that
\begin{align}
 u(\tau,x_{r,\tau-r})=u( r,x_{r}) + \int_{r}^{\tau} 
 	\mathfrak{d}_s u(s,x_{r,s-r})\,ds 
+\int^{\tau}_r\mathfrak{d}_{\omega}  u(s,x_{r,s-r})\,dW(s),\text{ a.s.;}
\label{derivative}
\end{align}
(ii) there exists a constant $\varrho\in (0,\infty)$ such that  for almost all $(\omega,t)\in\Omega\times [0,T]$ and all $x_t\in \Lambda^0_t$, there holds $|\nabla u(t,x_t)| \leq \varrho$;\\
(iii) there exist a constant $\alpha\in (0,1)$ and a finite partition $0=\underline t_0<\underline t_1<\ldots<\underline t_n=T$, for integer $n\geq1$, such that $\nabla u$ is a.s. valued in $C((\underline t_j,\underline t_{j+1})\times \Lambda;\bR^d)$ for $j=0,\ldots,n-1$, and  for any $0<\delta<\min_{0\leq j \leq n-1} |\underline t_{j+1}-\underline t_j|$, and  $g=\mathfrak{d}_tu,\,\nabla_i u, \,(\mathfrak{d}_{\omega}u)^j$, $i=1,\dots,d$, $j=1,\dots,m$, there exists $L^{\delta}_{\alpha} \in (0,\infty) $ satisfying a.s. for almost all $t\in \cup_{0\leq j \leq n-1} (\underline t_j, \underline t_{j+1}-\delta]$ and all $x_t,y_t\in \Lambda_t$,
\begin{align}
|g(t,x_t)-g(t,y_t)| \leq L^{\delta}_{\alpha} \left\|x_t-y_t\right\|_0^{\alpha}. \label{holder-regulr}
\end{align}
 We say the number $\alpha$ is the exponent associated to $u\in \mathscr C^1_{\sF}$ and $0=\underline t_0<\underline t_1<\ldots<\underline t_n=T$ the associated partition.\footnote{The exponent $\alpha$ is not put in the notation $\mathscr C^{1}_{\sF}$, as in many applications, there is no need to specify the exponent.}
\end{defn}

Each $u\in \mathscr C_{\sF}^1 $ may be thought of as an It\^o process and thus a semi-martingale parameterized by $x\in\Lambda$. 
The uniqueness of semimartingale decomposition (by Doob-Meyer decomposition theorem) ensures the uniqueness of the pair $(\mathfrak{d}_tu,\,\mathfrak{d}_{\omega}u)$ at points $(\omega, t,x_{s,t-s})$ for $0\leq s <t\leq T$. Recall that by the definition of the space $\mathcal{L}^2(\Lambda; {\mathbb B })$ where $\mathbb B$ denotes a Banach space, for almost all $(\omega,t)\in\Omega\times [0,T]$, $g(t)$ is valued in $C(\Lambda_t;\mathbb B)$ for $g\in \mathcal{L}^p(\Lambda; {\mathbb B })$.  With a standard denseness argument we may define the pair $(\mathfrak{d}_tu,\,\mathfrak{d}_{\omega}u)$ in $ \cL^2(\Lambda;\bR)\times  \cL^2(\Lambda;\bR^m)$ with
$$(\mathfrak{d}_tu,\,\mathfrak{d}_{\omega}u)(t,x_t)=\lim_{s\rightarrow t^-}(\mathfrak{d}_tu,\,\mathfrak{d}_{\omega}u)(t,x_{s,t-s})=(\mathfrak{d}_tu,\,\mathfrak{d}_{\omega}u)(t,\lim_{s\rightarrow t^-} x_{s,t-s})\text{ a.s., }\forall\,  x_t\in \Lambda_t, $$
for almost all $t\in(0,T]$, and for each $x_0\in\bR^d$, set 
$$(\mathfrak{d}_tu,\,\mathfrak{d}_{\omega}u)(0,x_0)
=\lim_{h\rightarrow 0^+}\frac{1}{h}\int_0^h(\mathfrak{d}_su,\,\mathfrak{d}_{\omega}u)(s,x_{0,s})ds, $$ if the limit exists; otherwise put $(\mathfrak{d}_tu,\,\mathfrak{d}_{\omega}u)(0,x_0)=(0,0)$.  This makes sense of the two linear operators $\mathfrak{d}_t$ and $\mathfrak{d}_{\omega}$ which are consistent with the differential operators  in \cite[Section 5.2]{cont2013-founctional-aop} and \cite[Theorem 4.3]{Leao-etal-2018}. For the reader's interest, we may consider an example when $m=d=1$:
$$u(t,x_t)=\sin\left( \int_0^tg(s,t)\cos\left(x(s)+\|W_s\|_0\right) ds + x(t)+2W(t)\right),\quad t\in[0,T],\quad x_t\in \Lambda(\bR),$$
with $g\in C^2([0,T]\times[0,T])$. Then we have $u\in \mathscr C_{\sF}^1$
\begin{align*}
\nabla u(t,x_t)&=\cos\left( \int_0^tg(s,t)\cos\left(x(s)+\|W_s\|_0\right)  ds + x(t)+2W(t)\right),
\\
\mathfrak{d}_tu(t,x_t)&= -2u(t,x_{t-})+
 \cos\left( \int_0^tg(s,t)\cos\left(x(s)+\|W_s\|_0\right)  ds + x(t-)+2W(t)\right) 
  \\
 &\quad\quad 
  \cdot \bigg[\int_0^t\frac{\partial g(s,t)}{\partial t} \cos\left(x(s)+\|W_s\|_0\right) ds +g(t,t)\cos\left(x(t-)+\|W_t\|_0\right) \bigg] ,
 \\
 \mathfrak{d}_{\omega}u(t,x_t)&=
 2 \cos\left( \int_0^tg(s,t)\cos\left(x(s)+\|W_s\|_0\right)  ds + x(t-)+2W(t)\right),
 \end{align*}
where $x(0-)=x(0)$, $x(t-)=\lim_{s\rightarrow t-}x(s)$ for $t\in(0,T]$ and $x_{t-}=x_t^{x(t-)-x(t)}$.

Particularly, if $u(t,x)$ is a deterministic function on the time-\textit{state} space $[0,T]\times\bR^d$, we may have $\mathfrak{d}_{\omega}u \equiv 0$ and $\mathfrak{d}_tu$ coincides with the classical partial derivative in time; if the random function $u$ on $\Omega\times[0,T]\times \bR^d$ is regular enough (w.r.t. $\omega$), the term $\mathfrak{d}_{\omega}u$ is just the Malliavin derivative.  In addition, the operators $\mathfrak{d}_t$ and $\mathfrak{d}_{\omega}$ are different from the path derivatives $(\partial_t,\,\partial_{\omega})$ via the functional It\^o formulas (see \cite{buckdahn2015pathwise} and \cite[Section 2.3]{ekren2016viscosity-1}); if $u(\omega,t,x)$ is smooth enough w.r.t. $(\omega,t)$ in the path space,  for each $x\in\Lambda_T^0$,  we have the relation 
$$\mathfrak{d}_tu(\omega,t,x_{s,t-s})=\left(\partial_t+\frac{1}{2}\partial^2_{\omega\omega}\right)u(\omega,t,x_{s,t-s}),\quad \mathfrak{d}_{\omega}u(\omega,t,x_{s,t-s})=  \partial_{\omega}u(\omega,t,x_{s,t-s}),$$ for $0\leq s<t<T$,   which may be seen from   \cite[Section 6]{ekren2016viscosity-1} and \cite{buckdahn2015pathwise}.

\begin{rmk}\label{rmk-def-(iii)}
In condition (iii) in Definition \ref{defn-testfunc}, we endow the elements of $\mathscr C_{\sF}^1$ with a certain kind of regularity in a piecewise way, which would allow us to utilize the approximations (in $\mathscr C_{\sF}^1$) that are just piecewisely sufficiently regular in the proof for the uniqueness of viscosity solution in Section \ref{subsection-uniqueness}. In fact, the test function space $\mathscr C_{\sF}^1$ could be further enlarged. For instance, we may replace the H\"older continuity \eqref{holder-regulr} (that holds locally in time $t$) with a local one, i.e.,  let \eqref{holder-regulr} hold when $\left\|x_t-y_t\right\|_0$ is small enough. Nevertheless, in this work  we would not seek such a generality as it is sufficient to have the present version of  $\mathscr C_{\sF}^1$.
\end{rmk}

For each stopping time $t\leq T$, let $\mathcal{T}^t$ be the set of stopping times $\tau$ valued in $[t,T]$ and $\mathcal{T}^t_+$ the subset of $\mathcal{T}^t$ such that $\tau>t$ for each $\tau\in \mathcal{T}^t_+$. For each $\tau\in\mathcal T^0$ and $\Omega_{\tau}\in\sF_{\tau}$, we denote by $L^0(\Omega_{\tau},\sF_{\tau};\Lambda^0_{T})$ the set of $\Lambda^0_{T}$-valued $\sF_{\tau}$-measurable functions and define the restricted space
$$L^0(\Omega_{\tau},\sF_{\tau};\Lambda^0_{\tau})=\{  \xi(\tau\wedge \cdot): \xi\in L^0(\Omega_{\tau},\sF_{\tau};\Lambda^0_{T}) \}.$$ 
Here, we recall that for each $t\in[0,T]$ we think of $\Lambda^0_{t}$ as a complete subspace of $\Lambda^0_T$ as we do not distinguish $\eta\in\Lambda^0_{t}$ from $\eta(\tau\wedge \cdot) \in \Lambda^0_{T}$.
 
For each $k\in\bN^+$, $0\leq t\leq s\leq T$ and $\xi\in \Lambda_t$, define
\begin{align*}
\Lambda^{0,k;\xi}_{t,s}=\bigg\{x\in \Lambda_s:\,x(\tau)=\xi(\tau\wedge t)+ \int_{t\wedge \tau }^{\tau}g(r)\,dr, \,\,\tau\in[0,s], \quad &\text{for some }g\in L^{\infty}([0,T]) \\
 &\quad \text{with }\|g\|_{L^{\infty}([0,T])}\leq k  \bigg\},
\end{align*}
and in particular, we set $\Lambda^{0,k}_{0,t}=\cup_{\xi\in\bR^d}\Lambda^{0,k;\xi}_{0,t}$ for each $t\in [0,T]$. Then for each $\xi\in \Lambda_t^0$,  Arzel$\grave{\text{a}}$-Ascoli theorem indicates that each $\Lambda^{0,k;\xi}_{t,s}$ is compact in $\Lambda_s^0$. Moreover, it is obvious that $\cup_{k\in\bN^+} \Lambda^{0,k}_{0,s}$ is dense in $\Lambda_s^0$. In addition, by saying $(s,x)\rightarrow (t^+,\xi)$ for some $(t,\xi)\in [0,T)\times \Lambda_t^0$ we mean $(s,x)\rightarrow (t^+,\xi)$  with $s\in (t,T]$ and $x\in \cup_{k\in\bN^+} \Lambda_{t,s}^{0,k;\xi}$.

We now introduce the notion of viscosity solutions. For each $(u,\tau)\in \cS^{2}(\Lambda;\bR)\times \mathcal T^0$, $\Omega_{\tau}\in\sF_{\tau}$ with $\mathbb P(\Omega_{\tau})>0$ and $\xi\in L^0(\Omega_{\tau},\sF_{\tau};\Lambda^0_{\tau})$, we define for each $k\in \bN^+$,
{\small
\begin{align*}
\underline{\mathcal{G}}u(\tau,\xi;\Omega_{\tau},k):=\bigg\{
\phi\in\mathscr C^1_{\sF}:&
  \text{ there exists } \hat\tau \in  \mathcal T^{\tau}_+\text{ such that}\\
&(\phi-u)(\tau,\xi)1_{\Omega_{\tau}}=0=\essinf_{\bar\tau\in\mathcal T^{\tau}} E_{\sF_{\tau}}\left[\inf_{y\in \Lambda^{0,k;\xi}_{\tau,\bar\tau\wedge\hat\tau}}
(\phi-u)(\bar\tau\wedge \hat{\tau},y)
\right]1_{\Omega_{\tau}}  \text{ a.s.}
\bigg\},\\
\overline{\mathcal{G}}u(\tau,\xi;\Omega_{\tau},k):=\bigg\{
\phi\in\mathscr C^1_{\sF}:
&
  \text{ there exists } \hat\tau \in  \mathcal T^{\tau}_+\text{ such that}\\
&(\phi-u)(\tau,\xi)1_{\Omega_{\tau}}=0=\esssup_{\bar\tau\in\mathcal T^{\tau}} E_{\sF_{\tau}}\left[\sup_{y\in \Lambda^{0,k;\xi}_{\tau,\bar\tau\wedge\hat\tau} }
(\phi-u)(\bar\tau\wedge \hat{\tau},y)
\right]1_{\Omega_{\tau}}  \text{ a.s.}
\bigg\}.
\end{align*}
}

\begin{rmk}
In the above definitions of sub/superjets $\underline{\mathcal{G}}u(\tau,\xi;\Omega_{\tau},k)$ and $\overline{\mathcal{G}}u(\tau,\xi;\Omega_{\tau},k)$, due to (possibly non-Markovian type) randomness that is not addressed in Lukoyanov \cite{lukoyanov2007viscosity}, we adopt optimal stopping times after taking maximum/minimum over the compact subspaces of paths, which is, however, obvisouly different from taking optimal times  under certain classes of nonlinear expectations in \cite{ekren2014viscosity,ekren2016viscosity-1,ren2014overview}. As for the natural connections between optimal stopping times and non-Markovian type optimal controls, we refer to  \cite{Qiu2014weak,qiu2017viscosity,ren2014overview} for more discussions.
\end{rmk}

The definition of viscosity solutions then comes as follows.

\begin{defn}\label{defn-viscosity}
We say $u\in \cS^2 (\Lambda^0;\bR)$ is a viscosity subsolution (resp. supersolution) of SPHJ equation \eqref{SHJB}, if $u(T,x)\leq (\text{ resp. }\geq) G(x)$ for all $x\in\Lambda^0_T$ a.s., and for any $K_0\in \bN^+$, there exists $k\in\bN^+$ with $k\geq K_0$ such that for any $\tau\in  \mathcal T^0$, $\Omega_{\tau}\in\sF_{\tau}$ with $\mathbb P(\Omega_{\tau})>0$ and $\xi\in L^0(\Omega_{\tau},\sF_{\tau};\Lambda^0_{\tau})$ and any $\phi\in \underline{\cG}u(\tau,\xi;\Omega_{\tau},k)$ (resp. $\phi\in \overline{\cG}u(\tau,\xi;\Omega_{\tau},k)$), there holds
\begin{align}
&\text{ess}\liminf_{(s,x)\rightarrow (\tau^+,\xi)}
	  \left\{ -\mathfrak{d}_{s}\phi(s,x)-\bH(s,x,\nabla \phi(s,x)) \right\}  \leq\ \,0, \text{ for almost all } \omega\in\Omega_{\tau}
\label{defn-vis-sub}
\end{align}
\begin{align}
\text{(resp.} \quad &\text{ess}\!\!\limsup_{(s,x)\rightarrow (\tau^+,\xi)} 
	\!\! 
		\left\{ -\mathfrak{d}_{s}\phi(s,x)-\bH(s,x,\nabla \phi(s,x)) \right\}  \geq\ \,0,  \text{ for almost all } \omega\in\Omega_{\tau}\text{).}
\label{defn-vis-sup}
\end{align}
The function $u$ is a viscosity solution of SPHJ equation \eqref{SHJB} if it is both a viscosity subsolution and a viscosity supersolution of \eqref{SHJB}.
\end{defn}
\begin{rmk}\label{rmk-defn}
In the above definition, one may see that each viscosity subsolution (resp. supersolution) of SPHJ equation \eqref{SHJB} is associated to an infinite sequence of integers $1\leq \underline k_1<\underline k_2<\dots<\underline k_n<\dots$ (resp. $1\leq \overline k_1<\overline k_2<\dots<\overline k_n<\dots$) such that the required properties in Definition \ref{defn-viscosity} are holding for all the test functions in  $\underline{\cG}u(\tau,\xi;\Omega_{\tau},\underline k_i)$ (resp. $\overline{\cG}u(\tau,\xi;\Omega_{\tau},\overline k_i)$) for all $i\in\bN^+$. 
\end{rmk}

Throughout this paper, we define for each $\phi\in\mathscr C^1_{\sF}$, $v\in U$, $t\in[0,T]$, and $x_t\in \Lambda_t$,
\begin{align*}
\mathscr L^{v}\phi(t,x_t)=
 \mathfrak{d}_t \phi (t,x_t)      +\beta'(t,x_t,v)\nabla\phi(t,x_t).
\end{align*}

\begin{rmk}\label{rmk-bH}
In view of the assumption $ (\cA 1)$, for each $\phi\in \mathscr C_{\sF}^1$,  there exists an $\sF_t$-adapted process $\zeta^{\phi}\in L^2(\Omega\times [0,T])$ such that for a.e. $(\omega,t)\in \Omega\times [0,T]$, and all $x_t \in \Lambda_t$, we have
\begin{align*}
 &\Big|  -\mathfrak{d}_{t}\phi(t,x_t)-\bH(t,x_t,\nabla \phi(t,x_t)) 
  \Big| 
   \leq 
  \sup_{v\in U}
  \Big|  \mathscr L^{v}\phi(t,x_t) + f(t,x_t,v) \Big|
  \leq \zeta^{\phi}_t;
\end{align*}
meanwhile,  there exists a finite partition $0=\underline t_0<\underline t_1<\ldots<\underline t_n=T$,  such that  for any $0<\delta<\min_{0\leq j \leq n-1} |\underline t_{j+1}-\underline t_j|$,  there exists $L^{\phi}_{\alpha} \in (0,\infty) $ satisfying a.s. for almost all $t\in \cup_{0\leq j \leq n-1} (\underline t_j, \underline t_{j+1}-\delta]$ and all $x_t,\bar x_t\in \Lambda_t$, 
\begin{align}
 & \Big| 
  \left\{ -\mathfrak{d}_{t}\phi(t,x_t)-\bH(t,x_t,\nabla \phi(t,x_t))   \right\}
  -\left\{ -\mathfrak{d}_{t}\phi(t,\bar x_t)-\bH(t,\bar x_t,\nabla \phi(t,\bar x_t))  \right\}\Big|\nonumber\\
  &
  \leq 
  \sup_{v\in U}
  \Big| 
   \left( \mathscr L^{v}\phi(t,x_t) +f(t,x_t,v) \right)
  -\left( \mathscr L^{v}\phi(t,\bar x_t)  + f(t,\bar x_t,v)  \right)\Big|
  \nonumber
  \\
  &
  \leq L^{\phi}_{\alpha}  \left(\|x_t-\bar x_t\|^{\alpha}_0 +\|x_t-\bar x_t\| _0  \right),
 \label{R-Lip-const}
\end{align}
where $\alpha$ is the exponent associated to $\phi \in \mathscr C^1_{\sF}$.
Therefore, the  essential limits in \eqref{defn-vis-sub} and \eqref{defn-vis-sup} are well-defined.
\end{rmk}

In Definition \ref{defn-viscosity}, the defined viscosity solution integrates the following three main aspects:

First, due to the path-dependence of the coefficients, the solution $u$ is path-wisely defined on the path space $C([0,T];\bR^d)$, and the lack of local compactness of the path space prompts us to define the random test functions (tangent from above or from below) in certain compact subspaces via optimal stopping times. In view of  the spaces $\underline{\cG}u(\tau,\xi;\Omega_{\tau},k)$ and $\overline{\cG}u(\tau,\xi;\Omega_{\tau},k)$, one may, however, see that we avoid the usage of the capacity and nonlinear expectation techniques for deterministic path-dependent PDEs (see \cite{ekren2014viscosity,ekren2016viscosity-1} for instance).

Second,  as the involved coefficients are  just measurable w.r.t. $\omega$ on the sample space $(\Omega,\sF)$ without any specified topology,  it is not appropriate to define the viscosity solutions in a pointwise manner w.r.t. $\omega\in (\Omega,\sF)$. This together with the  nonanticipativity constraint enlightens us to use relatively regular random fields in $\mathscr C_{\sF}^1$ as test functions; at each point $(\tau,\xi)$, the classes of test functions are also parameterized by measurable set $\Omega_{\tau}\in\sF_{\tau}$ and the type of compact subspaces of $C([0,T];\bR^d)$. 
 
Third, the test function space $ \mathscr C_{\sF}^1$ is expected to include the classical solutions. However, it is typical that the classical solutions may not be  differentiable in the time variable $t$ and $(\mathfrak{d}_tu,\mathfrak{d}_{\omega}u)$ may not be time-continuous but just measurable in $t$, which is also reflected in Definition \ref{defn-testfunc}; see \cite{DuQiuTang10,Tang-Wei-2013} for the \text{state}-dependent BSPDEs, or one may even refer to the standard theory of BSDEs that may be though of as trivial path-independent cases.  This nature leads to the usage of essential limits in \eqref{defn-vis-sub} and \eqref{defn-vis-sup}.

\section{Existence of the viscosity solution}
\subsection{Some auxiliary results}
Under assumption $(\cA1)$ with the vanishing diffusion coefficients in stochastic (ordinary) differential equation \eqref{state-proces-contrl}, the following assertions are standard; see \cite{karatzas1998brownian,yong-zhou1999stochastic} for instance.
\begin{lem}\label{lem-SDE}
Let $(\cA1)$ hold. Given $\theta\in\cU$, for the strong solution of (stochastic) ODE \eqref{state-proces-contrl}, there exists $K>0$  such that, for any $0\leq r \leq t\leq s \leq T$,  and $\xi\in L^0(\Omega,\sF_r;\Lambda_r)$,
 \\[3pt]
(i)   the two processes $\left(X_s^{r,\xi;\theta}\right)_{t\leq s \leq T}$ and $\left(X^{t,X_t^{r,\xi;\theta};\theta}_s\right)_{t\leq s\leq T}$ are indistinguishable;\\[2pt]
(ii)  $\max_{r\leq l \leq T} \left\|X^{r,\xi;\theta}_l\right\|_0 \leq K \left(1+ \|\xi\|_0\right)$ a.s.;\\[2pt]
(iii) $   d_0( X^{r,\xi;\theta}_s,\,X^{r,\xi;\theta}_t  )  \leq K \left(|s-t|+  |s-t|^{1/2} \right)$ a.s.;\\[2pt]
(iv) given another $\hat{\xi}\in L^0(\Omega,\sF_r;\Lambda_r)$, 
$$
\max_{r\leq l \leq T} \left\|X^{r,\xi;\theta}_l-X^{r,\hat\xi;\theta}_l\right\|_0 \leq K  \|\xi-\hat\xi\|_0
\quad \text{a.s.};
$$
(v) the constant $K$ depends only on $L,$ and $T$.
\end{lem}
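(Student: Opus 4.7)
The assertions are standard consequences of ODE theory applied pathwise, so the proof proceeds $\omega$-by-$\omega$, exploiting the boundedness and Lipschitz estimates in $(\mathcal{A}1)$ and reducing everything to Gronwall-type arguments.

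The plan is to start by fixing $\omega$ outside a null set and treating \eqref{state-proces-contrl} as a deterministic ODE with a driver that is measurable in $t$, bounded by $L$, and Lipschitz in the path with constant $L$. Existence and uniqueness of the strong solution on $[r,T]$ with initial datum $\xi$ at time $r$ then follow from a contraction argument in $C([r,T];\bR^d)$ equipped with a weighted sup norm, and the Lipschitz dependence on initial data is built in. Assertion (i) is then immediate: both $(X_s^{r,\xi;\theta})_{s\in[t,T]}$ and $(X^{t,X_t^{r,\xi;\theta};\theta}_s)_{s\in[t,T]}$ satisfy the same ODE on $[t,T]$ with the same $\sF_t$-measurable initial path, so pathwise uniqueness yields indistinguishability.

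For (ii), I would write $X^{r,\xi;\theta}(s)=\xi(r)+\int_r^s \beta(u,X_u^{r,\xi;\theta},\theta(u))\,du$ for $s\in[r,T]$, take absolute values, use $|\beta|\leq L$, and then take the supremum over $s\in[r,l]$ together with $\|\xi\|_0$ on $[0,r]$ to conclude $\|X_l^{r,\xi;\theta}\|_0\leq \|\xi\|_0+L T$, yielding the claimed bound with $K$ depending on $L,T$ only. For (iii), I would directly estimate using the definition of $d_0$: on the overlap interval $[0,s)$ the two paths $X_s^{r,\xi;\theta}$ and $X_t^{r,\xi;\theta}$ coincide (so the corresponding supremum vanishes), while on $[s,t]$ the quantity $|X(s)-X(u)|$ is controlled by $\int_s^u|\beta|\,dv\leq L(t-s)$. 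Adding the $\sqrt{t-s}$ term from the definition of $d_0$ gives the estimate with $K=\max(1,L)$. For (iv), subtracting the two integral equations and using the Lipschitz condition on $\beta$ yields, with $\psi(s):=\|X_s^{r,\xi;\theta}-X_s^{r,\hat\xi;\theta}\|_0$,
\begin{align*}
\psi(s)\leq \|\xi-\hat\xi\|_0 + L\int_r^s \psi(u)\,du,\quad s\in[r,T].
\end{align*}
A Gronwall inequality then gives $\psi(s)\leq \|\xi-\hat\xi\|_0 e^{L T}$, so (iv) holds with $K=e^{LT}$, and (v) follows by taking $K$ to be the maximum of the constants appearing in (ii)--(iv), which indeed depends only on $L$ and $T$.

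The only mildly delicate point is measurability: since $\theta$ is only $\sF_t$-adapted and $\beta$ is only measurable in $\omega$, I must verify that the pathwise construction produces an adapted solution. This is handled by the standard Picard iteration scheme, which preserves adaptedness at every step and converges uniformly on $[r,T]$ for almost every $\omega$, so the limit is adapted; the same scheme gives the flow identity in (i) before passing to indistinguishability via continuity in $s$. Once adaptedness is in hand, everything else is a pointwise in $\omega$ Gronwall estimate, and no probabilistic machinery beyond this is required.
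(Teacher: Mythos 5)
Your proof is correct and follows exactly the standard pathwise ODE/Gronwall approach that the paper itself invokes by citing references and giving no proof (the state equation has no diffusion, so the estimates reduce to $\omega$-by-$\omega$ deterministic arguments exactly as you carry out). One minor notational slip in (iii): since the lemma has $t\le s$, the overlap interval on which the two restricted paths agree is $[0,t)$ and the ``new'' segment is $[t,s]$, where $|X(t)-X(u)|\le L(s-t)$ for $u\in[t,s]$; your indices $s$ and $t$ are swapped, but the bound and the constant $\max(1,L)$ are unaffected.
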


%

The following regular properties of the value function $V$ hold in a similar way to \cite[Proposition 3.3]{qiu2017viscosity}  and the proof is omitted.
\begin{prop}\label{prop-value-func}
Let $(\cA 1)$ hold.
\\[4pt]
(i) For each  $t\in[0,T]$, $\eps\in (0,\infty)$, and $\xi\in L^0(\Omega,\sF_t;\Lambda_t)$, there exists $\bar{\theta}\in\cU$ such that
$$
E\left[ J(t,\xi;\bar{\theta})-V(t,\xi)\right]<\eps.
$$
(ii) For each $({\theta},x_0)\in\cU\times \bR^d$, $\left\{J(t,X_t^{0,x_0;\theta};{\theta})-V(t,X_t^{0,x_0;\theta})\right\}_{t\in[0,T]}$ is a supermartingale, i.e.,  for any $0\leq t\leq \tilde{t}\leq T$,
\begin{align}
V(t,X_t^{0,x_0;\theta})
\leq E_{\sF_t}V(\tilde{t},X_{\tilde{t}}^{0,x_0;{\theta}}) + E_{\sF_t}\int_t^{\tilde{t}}f(s,X_s^{0,x_0;{\theta}},\theta(s))\,ds,\,\,\,\text{a.s.}\label{eq-vfunc-supM}
\end{align}
(iii) For each $({\theta},x_0)\in\cU\times \bR^d$, $\left\{V(s,X_s^{0,x_0;{\theta}})\right\}_{s\in[0,T]}$ is a continuous process.
\\[3pt]
(iv) 
 There exists $L_V>0$ such that for each $(\theta,t)\in\cU \times[0,T]$,
$$
|V(t,x_t)-V(t,y_t)|+|J(t,x_t;\theta)-J(t,y_t;\theta)|\leq L_V\|x_t-y_t\|_0,\,\,\,\text{a.s.},\quad \forall\,x_t,y_t\in\Lambda_t,
$$
with $L_V$ depending only on $T$ and $L$.
\\[3pt]
(v) With probability 1, $V(t,x)$ and $J(t,x;\theta)$ for each $\theta\in\cU$ are continuous  on $[0,T]\times\Lambda$ and 
$$ \sup_{(t,x)\in[0,T]\times\Lambda}   \max\left\{|V(t,x_t)|,\,|J(t,x_t;\theta)| \right\}       \leq L(T+1) \quad\text{a.s.}$$
\end{prop}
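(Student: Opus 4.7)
The plan is to prove the five parts in the logical order (iv)--(v, boundedness)--(i)--(ii)--(iii)--(v, continuity), since the Lipschitz bound in the path variable underpins everything else. For part (iv), I would fix $\theta\in\cU$ and use Lemma \ref{lem-SDE}(iv) combined with the Lipschitz property in $({\mathcal A}1)$(iii) to bound
\begin{align*}
|J(t,x_t;\theta)-J(t,y_t;\theta)|
\leq E_{\sF_t}\!\!\left[\int_t^T L\,\|X^{t,x_t;\theta}_s-X^{t,y_t;\theta}_s\|_0\,ds+L\,\|X^{t,x_t;\theta}_T-X^{t,y_t;\theta}_T\|_0\right]
\leq L_V\|x_t-y_t\|_0,
\end{align*}
with $L_V=L(1+T)K$. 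Taking essinf over $\theta\in\cU$ transfers the bound to $V$. The uniform bound in part (v) is immediate from the $L^\infty$ estimate on $G$ and $f$ in $({\mathcal A}1)$(iii).

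For part (i), the key is that $\{J(t,\xi;\theta):\theta\in\cU\}$ is directed downward: given $\theta_1,\theta_2\in\cU$, the set $A=\{J(t,\xi;\theta_1)\leq J(t,\xi;\theta_2)\}$ lies in $\sF_t$, so $\tilde\theta:=\theta_1\mathbf{1}_A+\theta_2\mathbf{1}_{A^c}$ is still $\sF_s$-adapted and a pathwise uniqueness argument on \eqref{state-proces-contrl} gives $X^{t,\xi;\tilde\theta}=X^{t,\xi;\theta_1}\mathbf{1}_A+X^{t,\xi;\theta_2}\mathbf{1}_{A^c}$, hence $J(t,\xi;\tilde\theta)=\min(J(t,\xi;\theta_1),J(t,\xi;\theta_2))$. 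Standard results on essential infima of downward-directed families then yield a sequence $\theta_n$ with $J(t,\xi;\theta_n)\downarrow V(t,\xi)$ a.s., and monotone convergence delivers part (i).

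For part (ii), I would apply part (i) at time $\tilde t$ with initial data $X^{0,x_0;\theta}_{\tilde t}$ to obtain, for each $\eps>0$, some $\theta^\eps\in\cU$ with $E[J(\tilde t,X^{0,x_0;\theta}_{\tilde t};\theta^\eps)-V(\tilde t,X^{0,x_0;\theta}_{\tilde t})]<\eps$. Concatenate to form $\hat\theta:=\theta\mathbf{1}_{[0,\tilde t)}+\theta^\eps\mathbf{1}_{[\tilde t,T]}$, which is adapted, and invoke the flow property in Lemma \ref{lem-SDE}(i) together with the tower property of conditional expectation to get
\begin{align*}
V(t,X^{0,x_0;\theta}_t)\leq J(t,X^{0,x_0;\theta}_t;\hat\theta)
= E_{\sF_t}\!\!\int_t^{\tilde t}\!\!f(s,X^{0,x_0;\theta}_s,\theta(s))\,ds+E_{\sF_t}J(\tilde t,X^{0,x_0;\theta}_{\tilde t};\theta^\eps).
\end{align*}
Letting $\eps\to 0$ gives \eqref{eq-vfunc-supM}. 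Part (iii) then follows because the right continuous modification of a càdlàg supermartingale adapted to a Brownian filtration has no jumps when both sides of \eqref{eq-vfunc-supM} are continuous in $\tilde t$: the $f$-integral is a.s.\ continuous in $t$, and $V(t,X^{0,x_0;\theta}_t)$ is a.s.\ continuous thanks to part (iv) together with Lemma \ref{lem-SDE}(iii), which controls $d_0(X^{0,x_0;\theta}_s,X^{0,x_0;\theta}_t)$. Joint continuity of $V$ and $J(\cdot,\cdot;\theta)$ on $[0,T]\times\Lambda$ in part (v) is obtained by combining the uniform-in-$t$ Lipschitz estimate in (iv) with the modulus-of-continuity in $t$ coming from the DPP together with boundedness of $f$.

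The main obstacle I foresee is part (i): verifying that the directed-downward/measurable-selection scheme runs cleanly when $\xi$ is itself a random $\sF_t$-measurable path and $\cU$ consists of adapted controls on the whole interval, so that $\tilde\theta$ and the minimizing sequence stay in $\cU$. Everything else is then a routine adaptation of Markovian stochastic control arguments, with Lemma \ref{lem-SDE} providing all the needed pathwise estimates.
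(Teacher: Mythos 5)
Your arguments for parts (i), (ii), (iv), and the boundedness part of (v) are sound and follow the standard lines of Markovian control theory adapted to the path-dependent setting; the paper only cites \cite[Proposition 3.3]{qiu2017viscosity} and omits the proof, so I cannot match your route against the paper's line-by-line, but those parts would survive scrutiny. One small repair in (i): defining $\tilde\theta(s)=\theta_1(s)\mathbf{1}_A+\theta_2(s)\mathbf{1}_{A^c}$ on \emph{all} of $[0,T]$ breaks adaptedness for $s<t$ (since $A\in\sF_t$); simply set $\tilde\theta(s)=\theta_1(s)$ for $s<t$, which is harmless because $J(t,\xi;\cdot)$ ignores the control before $t$. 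You flag this obstacle yourself, and it is indeed only a cosmetic fix.

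The genuine gap is in parts (iii) and in the joint-continuity half of (v). For (iii) you write that $V(t,X^{0,x_0;\theta}_t)$ is a.s.\ continuous ``thanks to part (iv) together with Lemma \ref{lem-SDE}(iii)''. But part (iv) only gives $|V(t,x_t)-V(t,y_t)|\le L_V\|x_t-y_t\|_0$ for paths of the \emph{same} length $t$; it says nothing about $|V(s,\cdot)-V(t,\cdot)|$ for $s\neq t$. Lemma \ref{lem-SDE}(iii) controls $d_0(X_s,X_t)$, which packages a time increment and a spatial increment, but to exploit it you would already need continuity of $V$ in the $d_0$ metric — which is exactly the content of part (v). So the appeal is circular. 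Your remark that ``the right-continuous modification of a càdlàg supermartingale ... has no jumps when both sides of \eqref{eq-vfunc-supM} are continuous in $\tilde t$'' presupposes continuity of the $V$-side, again begging the question. The correct mechanism is structural: for each fixed $\theta$, the process $J(t,X^{0,x_0;\theta}_t;\theta)+\int_0^t f\,ds$ is a (bounded) martingale in a Brownian filtration, hence a.s.\ continuous; the nonnegative supermartingale $J(t,X_t;\theta)-V(t,X_t)$ admits a Doob--Meyer decomposition with a continuous martingale part, forcing its jumps to be non-positive. Combining this with a near-optimal control from part (i) started at time $0$ (so that the supermartingale $J(\cdot,X^{\theta_\eps}_\cdot;\theta_\eps)-V(\cdot,X^{\theta_\eps}_\cdot)$ has small initial value and a maximal inequality controls its path supremum) is what actually pins down the jumps from the other side. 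None of that is in your sketch. For the joint continuity in (v), the appeal to ``the DPP'' is also problematic: the full DPP is Theorem \ref{thm-DPP}, which the paper proves \emph{after} and \emph{using} Proposition \ref{prop-value-func} (iv), (v), and (i); only the one-sided submartingale inequality (ii) is available at this stage, which does not by itself yield a pathwise time modulus of continuity for $V$ or $J$ evaluated at varying initial times with horizontally extended paths. You need an explicit argument that handles the mismatch between $\sF_t$-measurable $V(t,\cdot)$ and $\sF_s$-measurable $V(s,\cdot)$, typically via the flow identity $J(t,x_t;\theta)=E_{\sF_t}[\int_t^s f\,dr+J(s,X^{t,x_t;\theta}_s;\theta)]$ combined with the martingale regularity of the conditional-expectation process, not merely the spatial Lipschitz bound.
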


\begin{thm}\label{thm-DPP}
Let assumption $(\cA1)$ hold. For any stopping times $\tau,\hat\tau $ with $\tau\leq \hat\tau \leq T$, and any $ \xi\in L^0(\Omega,\sF_{\tau};\Lambda^0_{\tau})$,
 we have
\begin{align*}
V(\tau,\xi)=\essinf_{\theta\in\cU} E_{\sF_{\tau}}
\left[
\int_{\tau}^{\hat\tau} f\left(s,X_s^{\tau,\xi;\theta},\theta(s)\right)\,ds + V\left(\hat\tau,X^{\tau,\xi;\theta}_{\hat \tau}\right)
\right] \quad a.s.
\end{align*}
\end{thm}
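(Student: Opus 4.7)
I would prove the dynamic programming identity in two halves, $\le$ and $\ge$.

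\emph{The $\le$ direction.} My plan is to show that for every $\theta\in\cU$,
$$V(\tau,\xi)\le E_{\sF_\tau}\!\left[\int_\tau^{\hat\tau}f(s,X_s^{\tau,\xi;\theta},\theta(s))\,ds+V(\hat\tau,X_{\hat\tau}^{\tau,\xi;\theta})\right]\ \text{a.s.,}$$
and then take the essinf over $\theta$. Proposition 3.3 (ii) already yields this inequality for deterministic times $t\le\tilde t$ and deterministic initial state $x_0$ through the supermartingale property of $J(\cdot,X_\cdot^{0,x_0;\theta};\theta)-V(\cdot,X_\cdot^{0,x_0;\theta})$. I would first upgrade this to an $\sF_\tau$-measurable initial datum $\xi$ (for deterministic $\tau$) by a trivial shift of the flow, and then pass to stopping times $\tau\le\hat\tau$ by approximating both with discretizations $\tau_n,\hat\tau_n$ taking finitely many deterministic values, applying the identity conditionally on each level set $\{\tau_n=t_i,\hat\tau_n=t_j\}\in\sF_{\tau_n}$, and passing to the limit with the help of the path-continuity of $s\mapsto V(s,X_s^{\cdot,\cdot;\theta})$ (Proposition 3.3 (iii)), the Lipschitz bound (iv), and Lemma 3.1 (iii).

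\emph{The $\ge$ direction: pasting with measurable selection.} Fixing $\eps>0$ and an arbitrary $\theta_1\in\cU$, I would build an $\eps$-optimal extension of $\theta_1$ beyond $\hat\tau$ by a countable pasting argument. Using the a.s.\ Lipschitz continuity of $x\mapsto V(\hat\tau,x)$ and $x\mapsto J(\hat\tau,x;\theta)$ from Proposition 3.3 (iv), I cover a separable set containing the range of $X_{\hat\tau}^{\tau,\xi;\theta_1}$ by countably many balls $\{B_\eps(x^i)\}_{i\ge1}\subset\Lambda$, take an $\sF_{\hat\tau}$-measurable partition $\{A_i\}$ with $X_{\hat\tau}^{\tau,\xi;\theta_1}\in B_\eps(x^i)$ on $A_i$, and invoke Proposition 3.3 (i) to pick $\bar\theta_i\in\cU$ with $E[J(\hat\tau,x^i;\bar\theta_i)-V(\hat\tau,x^i)]<\eps\,2^{-i}$. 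Defining
$$\tilde\theta(s):=\theta_1(s)\,1_{[0,\hat\tau)}(s)+\sum_{i\ge1}\bar\theta_i(s)\,1_{A_i}\,1_{[\hat\tau,T]}(s)\in\cU,$$
then combining the flow property Lemma 3.1 (i), the Lipschitz bound (iv), and $V\le J$, I expect
$$V(\tau,\xi)\le J(\tau,\xi;\tilde\theta)\le E_{\sF_\tau}\!\left[\int_\tau^{\hat\tau}f(s,X_s^{\tau,\xi;\theta_1},\theta_1(s))\,ds+V(\hat\tau,X_{\hat\tau}^{\tau,\xi;\theta_1})\right]+C\eps\ \text{a.s.,}$$
with $C$ depending only on the Lipschitz constant $L_V$. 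Taking essinf over $\theta_1\in\cU$ and letting $\eps\downarrow0$ closes the inequality.

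\emph{Main obstacle.} The technically delicate step is making the pasted control $\tilde\theta$ genuinely $\{\sF_s\}$-adapted while simultaneously approximating the random initial datum $X_{\hat\tau}^{\tau,\xi;\theta_1}$; without the spatial Lipschitz continuity of $V$ and $J$ (Proposition 3.3 (iv)) one cannot convert the coarse approximation $X_{\hat\tau}^{\tau,\xi;\theta_1}\approx x^i$ on $A_i$ into the needed uniform $\eps$-estimate. One then has to verify rigorously the conditional-expectation identity
$$J(\tau,\xi;\tilde\theta)=E_{\sF_\tau}\!\left[\int_\tau^{\hat\tau}f(s,X_s^{\tau,\xi;\theta_1},\theta_1(s))\,ds+J\!\left(\hat\tau,X_{\hat\tau}^{\tau,\xi;\theta_1};\tilde\theta\right)\right]$$
after the switch at $\hat\tau$, which is where Lemma 3.1 (i) and careful use of the essinf are unavoidable. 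The fact that the authors relegate this proof to the appendix supports that this selection/pasting bookkeeping, rather than any new analytic idea, is the main hurdle.
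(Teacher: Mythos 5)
Your pasting mechanism --- a countable cover by small balls, a measurable partition at $\hat\tau$, $\eps$-optimal selections via Proposition~\ref{prop-value-func}(i), and control of the error via the Lipschitz bound of Proposition~\ref{prop-value-func}(iv) --- is exactly the paper's route, and you correctly identify the adaptedness of the pasted control as the delicate point. But two issues remain. First, both of your ``directions'' end up proving the same inequality, $V(\tau,\xi)\le\essinf_{\theta}E_{\sF_\tau}[\,\cdots]=:\overline V(\tau,\xi)$: Proposition~\ref{prop-value-func}(ii) gives $V\le E_{\sF_\tau}[\,\cdots]$ for each $\theta$ and hence $V\le\overline V$ after the essinf, and the pasting argument yields precisely the same bound with a different $\eps$-remainder. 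The converse $\overline V\le V$, which follows in one line from $V\le J$ together with the tower property, namely $J(\tau,\xi;\theta)=E_{\sF_\tau}\big[\int_\tau^{\hat\tau}f\,ds+J(\hat\tau,X_{\hat\tau};\theta)\big]\ge E_{\sF_\tau}\big[\int_\tau^{\hat\tau}f\,ds+V(\hat\tau,X_{\hat\tau})\big]\ge\overline V$ for every $\theta$, is never stated; as written the proposal is redundant in one direction and silent in the other.

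Second, your concluding a.s.\ estimate ``$J(\tau,\xi;\tilde\theta)\le E_{\sF_\tau}[\,\cdots]+C\eps$ a.s.'' is not correct. Proposition~\ref{prop-value-func}(i) only produces $\bar\theta_i$ with $E[\alpha^i]<\eps\,2^{-i}$ where $\alpha^i:=J(\hat\tau,x^i;\bar\theta_i)-V(\hat\tau,x^i)\ge0$ is a random variable; after pasting, the remainder is $2L_V\eps+E_{\sF_\tau}\big[\sum_i\alpha^i\big]$, with a random second term controlled only in expectation. One therefore cannot ``take essinf over $\theta_1$ and let $\eps\downarrow0$'' to get the a.s.\ inequality directly. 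The paper's fix is to take the essinf over $\theta$ and then expectations, obtain $EV(\tau,\xi)\le E\overline V(\tau,\xi)+3\eps$, send $\eps\downarrow0$, and finally combine $EV\le E\overline V$ with the a.s.\ easy direction $\overline V\le V$ to upgrade the expectation equality to a pointwise one; your plan skips this step. Related to this, the $\eps$-cover of the terminal paths must be chosen uniformly in $\theta_1$ (otherwise the pasting data vary with $\theta_1$ and the essinf over $\theta_1$ does not close); the paper achieves this by noting that the uniformly bounded drift forces $X_{\hat\tau}^{\tau,\xi;\theta}\in\Lambda^{0,L;\xi}_{\tau,\hat\tau}$ a.s.\ for every $\theta$, and that this set is compact by Arzel\`a--Ascoli, hence admits a single countable $\delta$-partition that works for all controls at once.
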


The proof is similar to \cite[Theorem 3.4]{qiu2017viscosity}, but with some careful compactness arguments about path (sub)spaces; therefore, the proof is postponed to the appendix.

\subsection{Existence of the viscosity solution}
We first generalize an It\^o-Kunita formula by Kunita \cite[Pages 118-119]{kunita1981some} for the composition of random fields and stochastic differential equations to our \textit{path-dependent} setting. Recall that for each $\phi\in\mathscr C^1_{\sF}$, $v\in U$, $t\in[0,T]$, and $x_t\in \Lambda_t$,
\begin{align*}
\mathscr L^{v}\phi(t,x_t)=
 \mathfrak{d}_t \phi (t,x_t)      +\beta'(t,x_t,v)\nabla\phi(t,x_t).
\end{align*}

\begin{lem}\label{lem-ito-wentzell}
  Let assumption $(\cA1)$ hold.
 Suppose  $u\in\mathscr C_{\sF}^1$ with the associated partition $0=\underline t_0<\underline t_1<\ldots<\underline t_n=T$. Then, for each $\theta\in\cU$, it holds almost surely that, for each $\underline t_j \leq \varrho \leq \tau < \underline t_{j+1}$, $j=0,\ldots,n-1$, and $x_{\varrho}\in \Lambda_{\varrho}$, it holds that
    \begin{align}
  u(\tau,X^{{\varrho},x_{\varrho};\theta}_{\tau}) -u({\varrho},x_{\varrho})
= 
     	\!\int_{\varrho}^{\tau}  
	 \mathscr L^{\theta(s)} u\left(s,X^{{\varrho},x_{\varrho};\theta}_s \right)    \,ds      
	 +\int_{\varrho}^{\tau} \mathfrak{d}_{\omega}u(r,X^{{\varrho},x_{\varrho};\theta}_r)    
     \,dW(r),\text{ a.s.} \label{eq-ito}
   \end{align}
\end{lem}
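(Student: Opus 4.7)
The plan is to imitate Kunita's classical discretization argument, adapted to the path-dependent setting. Fix $j\in\{0,\dots,n-1\}$ with $\underline t_j\le\varrho\le\tau<\underline t_{j+1}$, and pick $\delta>0$ with $\tau\le\underline t_{j+1}-\delta$ so that the $\alpha$-Hölder bound of Definition \ref{defn-testfunc}(iii) with constant $L^{\delta}_\alpha$ is available on $(\underline t_j,\tau]$. Take partitions $\pi_N\colon \varrho=t_0<t_1<\cdots<t_N=\tau$ of mesh $\Delta_N\to 0$, write $X:=X^{\varrho,x_\varrho;\theta}$, and introduce the c\`adl\`ag piecewise-constant approximation $X^N\in\Lambda_\tau$ defined by $X^N(s)=X(s)$ on $[0,\varrho]$, $X^N(s)=X(t_k)$ on $[t_k,t_{k+1})$ for $k<N$, and $X^N(\tau)=X(\tau)$. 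Since $|\beta|\le L$ under $(\cA 1)$, Lemma \ref{lem-SDE}(iii) (with vanishing diffusion) yields $\|X_t-X^N_t\|_0\le L\Delta_N$ for all $t\in[\varrho,\tau]$.

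Telescope $u(\tau,X^N_\tau)-u(\varrho,x_\varrho)=\sum_k[u(t_{k+1},X^N_{t_{k+1}})-u(t_k,X^N_{t_k})]$ and split each summand as $A_k+C_k$, where $A_k:=u(t_{k+1},X^N_{t_{k+1}})-u(t_{k+1},B_k)$, $C_k:=u(t_{k+1},B_k)-u(t_k,X^N_{t_k})$, and $B_k:=(X^N_{t_k})_{t_k,t_{k+1}-t_k}\in\Lambda_{t_{k+1}}$ is the horizontal extension of $X^N_{t_k}$. A direct comparison using the definition of $X^N$ shows that $X^N_{t_{k+1}}=B_k^{h_k}$ is an \emph{exact} vertical perturbation of $B_k$ by the endpoint bump
\[
h_k:=X(t_{k+1})-X(t_k)=\int_{t_k}^{t_{k+1}}\beta(s,X_s,\theta(s))\,ds.
\]
The fundamental theorem of calculus in the vertical direction combined with the $\alpha$-Hölder estimate on $\nabla u$ then gives $A_k=\nabla u(t_{k+1},B_k)'h_k+r_k^v$ with $|r_k^v|\le L^{\delta}_\alpha|h_k|^{1+\alpha}$, while Definition \ref{defn-testfunc}(i) applied at starting time $t_k$ and initial path $X^N_{t_k}\in\Lambda_{t_k}$ delivers
\[
C_k=\int_{t_k}^{t_{k+1}}\mathfrak d_s u\bigl(s,(X^N_{t_k})_{t_k,s-t_k}\bigr)\,ds+\int_{t_k}^{t_{k+1}}\mathfrak d_\omega u\bigl(s,(X^N_{t_k})_{t_k,s-t_k}\bigr)\,dW(s).
\]

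Next I pass to the limit $N\to\infty$. The Taylor residuals sum to at most $L^{\delta}_\alpha L^{1+\alpha}T\Delta_N^\alpha\to 0$. In $\sum_kA_k$, substituting $\nabla u(t_{k+1},X_{t_{k+1}})$ for $\nabla u(t_{k+1},B_k)$ costs at most $L^{\delta}_\alpha L^{\alpha+1}T\Delta_N^\alpha$ (since $\|B_k-X_{t_{k+1}}\|_0\le L\Delta_N$), and the remaining Riemann sum $\sum_k\nabla u(t_{k+1},X_{t_{k+1}})'h_k$ converges a.s.\ to $\int_\varrho^\tau\nabla u(s,X_s)'\beta(s,X_s,\theta(s))\,ds$ by continuity of $\nabla u$ on $(\underline t_j,\underline t_{j+1})\times\Lambda$ and boundedness of $\beta$. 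The Lebesgue piece of $\sum_kC_k$ converges a.s.\ to $\int_\varrho^\tau\mathfrak d_s u(s,X_s)\,ds$ by dominated convergence, using the Hölder bound together with $\|(X^N_{t_k})_{t_k,s-t_k}-X_s\|_0\le L\Delta_N$; the stochastic piece converges in $L^2(\Omega)$ via It\^o's isometry and the same Hölder estimate applied to $\mathfrak d_\omega u$. Finally $u(\tau,X^N_\tau)\to u(\tau,X_\tau)$ by $d_0$-continuity of $u$, yielding identity \eqref{eq-ito}.

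The main obstacle is that, a priori, $u$ need not be Hölder under \emph{arbitrary} (non-vertical) path perturbations, so the crude estimate $\|X^N-X\|_0=O(\Delta_N)$ cannot be fed directly into $u$. The resolution is the double decomposition above: the move $X^N_{t_k}\mapsto B_k$ is purely horizontal (controlled by the SDE representation in Definition \ref{defn-testfunc}(i)), whereas $B_k\mapsto X^N_{t_{k+1}}$ is purely vertical (controlled by $\nabla u$), and the $\alpha$-Hölder regularity of $\nabla u,\mathfrak d_t u,\mathfrak d_\omega u$ from Definition \ref{defn-testfunc}(iii) makes each per-cell residual $O(\Delta_N^{1+\alpha})$, so the total error over $N\sim\Delta_N^{-1}$ cells is $O(\Delta_N^\alpha)\to 0$. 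A secondary subtlety is that the stochastic-integral convergence must be taken in $L^2(\Omega)$ rather than pathwise, but the $\cL^2$-bound on $\mathfrak d_\omega u$ makes this routine via It\^o's isometry.
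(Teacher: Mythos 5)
Your proof follows the same strategy as the paper's: discretize time, introduce the piecewise-constant approximation, split each cell increment into a horizontal piece handled by Definition \ref{defn-testfunc}(i) and a vertical piece handled by a vertical Taylor expansion, and pass to the limit. The only cosmetic differences are your explicit Taylor-remainder bookkeeping (using the H\"older bound from Definition \ref{defn-testfunc}(iii)) versus the paper's terser exact integral representation of the vertical increment and appeal to dominated convergence, and a small citation slip: the estimate $\|X_t-X^N_t\|_0\le L\Delta_N$ follows directly from $|\beta|\le L$ in $(\cA 1)$, not from Lemma \ref{lem-SDE}(iii) (which gives the weaker modulus $K(|s-t|+\sqrt{|s-t|})$).
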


\begin{proof}

In view of the time continuity, w.l.o.g., we only prove \eqref{eq-ito} for $ {\tau} \in (0,\underline t_1)$, ${\varrho}=0$ and $x_0=x\in \bR^d$. For each $N\in\bN^+$ with $N>2$, let $t_i=\frac{i{\tau}}{N}$ for $i=0,1,\dots,N$. Then, we get a partition of $[0,{\tau}]$ with $0=t_0<t_1<\cdots<t_{N-1}<t_N={\tau}$. For each $\theta\in\cU$, let
$$
^N\!X(t)=\sum_{i=0}^{N-1} X^{0,x;\theta}({t_i})1_{[t_i,t_{i+1})(t)} + X^{0,x;\theta}({\tau})1_{\{{\tau}\}}(t), \quad \text{for }t\in[0,{\tau}],
$$
and $^N\!X_{t-}(s)= \, ^N\!X(s)1_{[0,t)}(s) + \lim_{r\rightarrow t^-} \, ^N\!X(r) 1_{\{t\}}(s)$, for $0\leq s\leq t\leq {\tau}$. In view of the path continuity, we have the following approximation:
$$
\lim_{N\rightarrow \infty} \|X^{0,x;\theta}_t-^N\!X_t\|_0+\|X^{0,x;\theta}_t-^N\!X_{t-}\|_0=0\quad \text{for all }t\in(0,{\tau}],\quad \text{a.s.}
$$

For each $i\in\{0,\cdots,N-1\}$, 
\begin{align*}
u(t_{i+1},^N\!X_{t_{i+1}})-u(t_i,^N\!X_{t_i})
& = 
	u(t_{i+1},^N\!X_{t_{i+1}-})-u(t_i,^N\!X_{t_i})
	+u(t_{i+1},^N\!X_{t_{i+1}})-u(t_{i+1},^N\!X_{t_{i+1}-})
		\\
&:= I^i_1+I^i_2,
\end{align*}
where by $u\in\mathscr C_{\sF}^1$,
\begin{align*}
I^i_1=u(t_{i+1},^N\!X_{t_{i+1}-})-u(t_i,^N\!X_{t_i})
=\int_{t_i}^{t_{i+1}}  \mathfrak{d}_tu(s,^N\!X_{s-})\,ds 
	+ \int_{t_i}^{t_{i+1}} \mathfrak{d}_{\omega}u(r,^N\!X_{r-})    
     \,dW(r),
\end{align*}
and by the definition of vertical derivative and the integration by parts formula
\begin{align*}
I^i_2
&=u(t_{i+1},^N\!X_{t_{i+1}})-u(t_{i+1},^N\!X_{t_{i+1}-})\\
&=
	u\left( t_{i+1},^N\!X_{t_{i+1}-}^{^N\!X(t_{i+1})-^N\!X(t_{i})} \right)-u(t_{i+1},^N\!X_{t_{i+1}-})
		\\
&=
	  \int_{t_i}^{t_{i+1}} (\nabla u)'(t_{i+1},^N\!X_{t_{i+1}-} ^{X(s)-X(t_{i}) }) \beta(s,X^{0,x;\theta}_s,\theta(s))\,ds.
\end{align*}
Recall that
\begin{align}
u({\tau},^N\!X_{\tau})-u(0,x)
=\sum_{i=0}^{N-1}I^i_1+I^i_2. \label{eq-docomp}
\end{align}
Then, the dominated convergence and the dominated convergence theorem for stochastic integrals (\cite[Chapter IV, Theorem 32]{protter2005stochastic})  imply that the Lebesgue integrals converge almost surely and the stochastic integrals in probability, with the limits being the corresponding terms in \eqref{eq-ito}. Finally, we get 
    \begin{align*}
  u({\tau},X^{0,x;\theta}_{\tau}) -u(0,x)
= 
     	\!\int_0^{\tau}    \mathscr L^{\theta(s)} u\left(s,X^{0,x;\theta}_s \right)    \,ds      +\int_0^{\tau} \mathfrak{d}_{\omega}u(r,X^{0,x;\theta}_r)    
     \,dW(r),\quad \text{a.s.}
   \end{align*}
\end{proof}


Following is the existence of the viscosity solution.

\begin{thm}\label{thm-existence}
 Let $(\cA1)$ hold. The value function $V$ defined by \eqref{eq-value-func} is a viscosity solution of the stochastic path-dependent Hamilton-Jacobi equation \eqref{SHJB}.
\end{thm}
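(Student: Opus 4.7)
The strategy is to verify the two halves of Definition~\ref{defn-viscosity} separately, in each case combining the dynamic programming principle (Theorem~\ref{thm-DPP}) with the path-dependent It\^o--Kunita formula (Lemma~\ref{lem-ito-wentzell}) applied to an arbitrary test function. The terminal condition $V(T,\cdot)=G$ is immediate from \eqref{eq-value-func}. A preparatory observation: under $(\cA 1)$ the drift satisfies $|\beta|\leq L$, so for any admissible $\theta$ the controlled trajectory obeys $X^{\tau,\xi;\theta}_s\in \Lambda^{0,k;\xi}_{\tau,s}$ a.s.\ as soon as $k\geq L$. Given any $K_0\in\bN^+$, I therefore fix $k\geq\max\{K_0,L\}$ once and for all, so that every controlled trajectory appearing below is a valid candidate in the $\inf_y$/$\sup_y$ built into the test-function classes $\underline{\cG}V(\tau,\xi;\Omega_\tau,k)$ and $\overline{\cG}V(\tau,\xi;\Omega_\tau,k)$.

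For the supersolution inequality, let $\phi\in\overline{\cG}V(\tau,\xi;\Omega_\tau,k)$ with associated $\hat\tau_k\in\mathcal T^\tau_+$ and set $\hat\tau_n=(\tau+1/n)\wedge\hat\tau_k$. The family $\{E_{\sF_\tau}[\int_\tau^{\hat\tau_n}f\,ds+V(\hat\tau_n,X^\theta_{\hat\tau_n})]:\theta\in\cU\}$ is downward directed, so Theorem~\ref{thm-DPP} produces, for each $n$, an $\epsilon_n$-optimizer $\theta^n\in\cU$ with $V(\tau,\xi)+\epsilon_n\geq E_{\sF_\tau}[\int_\tau^{\hat\tau_n}f(s,X^{\theta^n}_s,\theta^n(s))\,ds+V(\hat\tau_n,X^{\theta^n}_{\hat\tau_n})]$. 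Applying the $\overline{\cG}V$-condition pointwise, $(\phi-V)(\hat\tau_n,X^{\theta^n}_{\hat\tau_n})\leq \sup_y(\phi-V)(\hat\tau_n,y)$, and then taking $E_{\sF_\tau}$ gives $E_{\sF_\tau}[V(\hat\tau_n,X^{\theta^n}_{\hat\tau_n})]\geq E_{\sF_\tau}[\phi(\hat\tau_n,X^{\theta^n}_{\hat\tau_n})]$. Using $\phi(\tau,\xi)=V(\tau,\xi)$, Lemma~\ref{lem-ito-wentzell} applied to $\phi(\hat\tau_n,X^{\theta^n}_{\hat\tau_n})-\phi(\tau,\xi)$, and the pointwise bound $f+\mathscr L^v\phi\geq \mathfrak d_t\phi+\bH(\nabla\phi)$, one obtains $\epsilon_n\geq E_{\sF_\tau}[\int_\tau^{\hat\tau_n}\{\mathfrak d_t\phi+\bH(\nabla\phi)\}(s,X^{\theta^n}_s)\,ds]$. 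Choosing $\epsilon_n=o(1/n)$, dividing by $\hat\tau_n-\tau=1/n$ (for $n$ large on $\Omega_\tau$), and extracting $s_n\in[\tau,\hat\tau_n]$ at which the integrand essentially attains its supremum, the pair $(s_n,X^{\theta^n}_{s_n})\to(\tau^+,\xi)$ yields the required $\text{ess}\limsup\geq 0$.

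For the subsolution inequality, let $\phi\in\underline{\cG}V(\tau,\xi;\Omega_\tau,k)$. For each $\epsilon>0$ I would construct an admissible $\theta^\epsilon\in\cU$ that nearly minimizes the Hamiltonian along its own trajectory, i.e.\ $f(s,X^{\theta^\epsilon}_s,\theta^\epsilon(s))+\beta'(s,X^{\theta^\epsilon}_s,\theta^\epsilon(s))\nabla\phi(s,X^{\theta^\epsilon}_s)\leq \bH(s,X^{\theta^\epsilon}_s,\nabla\phi(s,X^{\theta^\epsilon}_s))+\epsilon$ a.s.\ for a.e.\ $s\in[\tau,\hat\tau_k]$. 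This is achieved by a piecewise-constant scheme on a sufficiently fine random partition of $[\tau,\hat\tau_k]$: on each subinterval the value of $\theta^\epsilon$ is chosen via a Kuratowski--Ryll-Nardzewski measurable selector of an $(\epsilon/2)$-minimizer of $v\mapsto\beta'(\cdot,v)\nabla\phi(\cdot)+f(\cdot,v)$ evaluated at the left endpoint, the discretization error being absorbed into the remaining $\epsilon/2$ by the bound $|\nabla\phi|\leq\varrho$ of Definition~\ref{defn-testfunc}, the spatial H\"older regularity of $\nabla\phi$, and the Lipschitz properties in $(\cA 1)$. Running Theorem~\ref{thm-DPP} with $\theta^\epsilon$, invoking the $\underline{\cG}V$-condition $E_{\sF_\tau}[V(\hat\tau_n,X^{\theta^\epsilon}_{\hat\tau_n})]\leq E_{\sF_\tau}[\phi(\hat\tau_n,X^{\theta^\epsilon}_{\hat\tau_n})]$, and applying Lemma~\ref{lem-ito-wentzell} then yield $-\epsilon\,E_{\sF_\tau}[\hat\tau_n-\tau]\leq E_{\sF_\tau}[\int_\tau^{\hat\tau_n}\{\mathfrak d_t\phi+\bH(\nabla\phi)\}(s,X^{\theta^\epsilon}_s)\,ds]$; normalizing by $1/n$, choosing a subsequence $(s_n,X^{\theta^{\epsilon_n}}_{s_n})\to(\tau^+,\xi)$ realizing the essential infimum of the integrand, and letting $\epsilon\to 0$ delivers $\text{ess}\liminf_{(s,x)\to(\tau^+,\xi)}E_{\sF_\tau}\{-\mathfrak d_s\phi-\bH(\nabla\phi)\}\leq 0$ on $\Omega_\tau$.

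The main obstacle is the subsolution step. A naive use of constant controls $\theta\equiv v$ would only give $E_{\sF_\tau}[-\mathscr L^v\phi-f(\cdot,v)]\leq 0$ for each fixed $v\in U$, but because $\sup_v$, $E_{\sF_\tau}$ and the essential limits do not commute, one cannot simply take a supremum over $v$ to produce a bound on $E_{\sF_\tau}[-\mathfrak d_t\phi-\bH(\nabla\phi)]=E_{\sF_\tau}[\sup_v(-\mathscr L^v\phi-f)]$. The piecewise-constant measurable-selection construction of $\theta^\epsilon$ is what converts the per-$v$ pointwise inequality into an integrated inequality involving $\bH$ itself, and the regularity built into $\mathscr C^1_\sF$ (Lipschitz/H\"older in $x$ and boundedness of $\nabla\phi$) is used precisely to control the resulting discretization error. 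A secondary but routine subtlety is that $\mathfrak d_t\phi$ is only $L^2$ in time, which is exactly why the essential-limit formulation in Definition~\ref{defn-viscosity} is needed and why one must pass through integral inequalities and extract good subsequences rather than take naive pointwise time limits.
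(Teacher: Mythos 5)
Your plan has the right skeleton (DPP + path-dependent It\^o--Kunita + measurable selection, with $k\ge L$ so that all controlled trajectories live in $\Lambda^{0,k;\xi}_{\tau,s}$), and this matches the paper's proof in its essential ingredients. However, the paper argues by contradiction, which makes a crucial step clean that your direct argument glosses over, and I believe that step is a genuine gap in your sketch.

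The gap is in the final ``extraction''. After dividing by $1/n$ you are left with control on
$E_{\sF_\tau}\!\int_\tau^{\hat\tau_n}\{\mathfrak d_s\phi+\bH\}(s,X^{\theta^n}_s)\,ds$,
an expectation along a \emph{random} trajectory. But the viscosity condition in Definition~\ref{defn-viscosity} is an essential limit of $E_{\sF_\tau}\{-\mathfrak d_s\phi-\bH\}(s,x)$ over \emph{$\sF_\tau$-measurable} test points $x\in\Lambda^{0,k;\xi}_{\tau,s}$. The pair $(s_n,X^{\theta^n}_{s_n})$ you propose to extract is $\sF_{s_n}$-measurable, not $\sF_\tau$-measurable, so it is not a legal test point, and one cannot pull it out of the conditional expectation. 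The paper closes exactly this gap by comparing $X^{\theta}_s$ to the \emph{frozen} path $\xi_{\tau,s-\tau}$ (a valid $\sF_\tau$-measurable test path) and controlling the discrepancy via Lemma~\ref{lem-SDE}(iii) together with the H\"older modulus $L^\phi_\alpha$ of Remark~\ref{rmk-bH}; only in the contradiction framework does the estimate $d_0(X^{\tau,\xi;\theta}_{(\tau+h)\wedge\hat\tau_k},\xi_\tau)\le K(h+\sqrt h)$ feed cleanly into the bound $\eps - L^\phi_\alpha K((h+\sqrt h)^\alpha+h+\sqrt h)\to\eps$. You invoke the H\"older regularity of $\nabla\phi$ in the subsolution step only as a ``discretization error'' for your piecewise-constant control; you also need it (and the frozen-path substitution) in both halves to convert the trajectory integral into a statement about deterministic test points, which your sketch does not do.

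Two secondary points. First, your piecewise-constant construction of a near-optimal $\theta^\epsilon$ along its \emph{own} trajectory is recursive and more involved than needed: the paper instead applies measurable selection to choose $\bar\theta\in\cU$ that is $\eps$-optimal for the Hamiltonian along the frozen path $\xi_s$, which sidesteps the recursion entirely and lets the SDE estimate absorb the error between $\xi_s$ and $X^{\bar\theta}_s$. Second, in the supersolution half the paper also uses a Chebyshev-type estimate $\bP(\tau^\theta<\tau+h)\le\tilde C h^4$ (see \eqref{est-tau}) on the exit time from $B_{\tilde\delta}(\xi_\tau)$ to keep the trajectory where Remark~\ref{rmk-bH} applies uniformly over the partition interval; your sketch does not address how to control trajectories that leave the ball, though $\epsilon_n$-optimality alone does not prevent this.
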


\begin{proof}
\textbf{Step 1.} 
In view of Proposition \ref{prop-value-func}, we have $V\in\cS^{\infty}(\Lambda;\bR)$. To the contrary, suppose that for any $k\in \bN^+$ with $k\geq K_0$ for some $K_0\in\bN^+$,  there exists $\phi\in \underline\cG V(\tau,\xi_{\tau};\Omega_{\tau},k)$ with $\tau\in \mathcal T^0$, $\Omega_{\tau}\in\sF_{\tau}$, $\mathbb P(\Omega_{\tau})>0$, and $\xi_{\tau}\in L^0(\Omega_{\tau},\sF_{\tau};\Lambda_{\tau}^0)$,  such that   there exist $\eps>0$ and $\Omega'\in\sF_{\tau}$ with $\Omega'\subset \Omega_{\tau}$, $\bP(\Omega')>0$, satisfying a.e. on $\Omega'$,
{\small
\begin{align}
\lim_{\tilde\delta\rightarrow 0^+}\essinf_{
s \in (\tau,(\tau+\tilde\delta^2 )\wedge T],\,x\in B_{\tilde\delta}(\xi_{\tau}) \cap \Lambda^{0,k;\xi_{\tau}}_{\tau,s\wedge T}	
			} 
		\{ -\mathfrak{d}_{s}\phi(s,x)-\bH(s,x,\nabla\phi(s,x) )\}  \geq 2\,\eps.
		\label{eq-ex-1}
\end{align}
}

Notice that, associated to $\phi\in\mathscr C_{\sF}^1$, there is a partition $0=\underline t_0<\underline t_1<\ldots<\underline t_n=T$. W.l.o.g., we assume that there exists $\tilde{\delta}\in (0,1)$ with $2 \tilde \delta^2 <\min_{0\leq j\leq n-1} (\underline t_{j+1}-\underline t_j)$ such that $\Omega'=\{[\tau,\tau+2\tilde{\delta}^2] \subset [\underline t_j,\underline t_{j+1})\}$ for some $j\in\{0,\ldots, n-1\}$.

Choose the positive integer $k>L$ and let $\hat{\tau}$ be the stopping time associated to $\phi\in \underline\cG V(\tau,\xi_{\tau};\Omega_{\tau},k)$. Note that we may think of $\xi$ valued in $\Lambda^0_T$, with $\xi(t)=\xi_{t\wedge \tau}(t)$ for all $t\in [0,T]$. By assumption (ii) of $(\cA 1)$ and the measurable selection theorem, there exists $\bar\theta\in \cU$ such that for almost all $\omega\in\Omega'$,
$$
- \mathscr L^{\bar\theta(s)}\phi(s,\xi_s)  -f(s,\xi_s,\bar\theta(s))   \geq   -\mathfrak{d}_{s}\phi(s,\xi_s)-\bH(s,\xi_s,\nabla\phi(s,\xi_s)) -       \eps,
$$
for almost all $s$ satisfying $\tau\leq s< (\tau+\tilde{\delta}^2)\wedge T$. This together with \eqref{eq-ex-1} implies that
\begin{align*}
\lim_{\tilde{\delta}\rightarrow 0^+}\essinf_{\tau < s<\left(\tau+\tilde\delta^2\right)\wedge T} 
		\{ - \mathscr L^{\bar\theta(s)}\phi(s,\xi_s)  -f(s,\xi_s,\bar\theta(s)) \}  \geq \eps,\quad \text{a.e. on }\Omega'.
\end{align*}

In view of Remark \ref{rmk-bH}, Lemma \ref{lem-SDE}, the dynamic programming principle of Theorem \ref{thm-DPP}, and the generalized  It\^o-Kunita formula of Lemma \ref{lem-ito-wentzell}, we have for almost all $\omega\in\Omega'$, 
{\small
\begin{align*}
0 
&\geq \liminf_{h\rightarrow 0^+} \frac{1}{h}
	E_{\sF_{\tau}}\left[ 
		  (\phi-V)(\tau,\xi_{\tau}) -(\phi-V)\left((\tau+h)		\wedge\hat\tau,X_{(\tau+h)		\wedge\hat\tau}^{\tau,\xi_{\tau};\bar\theta}\right)   \right]
\\
&
\geq \liminf_{h\rightarrow 0^+} \frac{1}{h } 
	E_{\sF_{\tau}} \left[ 
		   \phi(\tau,\xi_{\tau}) - \phi\left((\tau+h)\wedge 		\hat\tau,X_{(\tau+h)  		\wedge\hat\tau}^{\tau,\xi_{\tau};\bar\theta}\right) 
		   -\int_{\tau}^{(\tau+h)		\wedge\hat\tau} f(s,X_s^{\tau,\xi_{\tau};\bar\theta},\bar\theta(s))\,ds    \right]
\\
&
=\liminf_{h\rightarrow 0^+}\frac{1}{h} E_{\sF_{\tau}} \left[
	\int_{\tau}^{(\tau+h)			\wedge\hat\tau}
		   \left( -\mathscr L^{\bar\theta(s)}  \phi(s,X_s^{\tau,\xi_{\tau};\bar\theta})		- f(s,X_s^{\tau,\xi_{\tau};\bar\theta},\bar\theta(s))   \right)\,ds \right]
 \\
&\geq
 	\liminf_{h\rightarrow 0^+} \frac{1}{h} E_{\sF_{\tau}} \bigg[
		\int_{\tau}^{(\tau+h)\wedge \hat \tau}
		   \left( -\mathscr L^{\bar\theta({s})}  \phi(s,\xi_s)		- f(s,\xi_s,\bar\theta({s}))   \right)\,ds
\\
&\quad
		-\int_{\tau}^{ (\tau+h)\wedge \hat \tau}
		   \left| -\mathscr L^{\bar\theta({s})}  \phi(s,\xi_s)		- f(s,\xi_s,\bar\theta({s}))  +  \mathscr L^{\bar\theta({s})}  \phi(s,X_{s	}^{\tau,\xi_{\tau};\bar\theta})
		   +  f(s,X_{s	}^{\tau,\xi_{\tau};\bar\theta},\bar\theta({s}))
		   \right|\,ds \bigg]
\\
&\geq
	 \eps 	 
	- \limsup_{h\rightarrow 0^+}\frac{1}{h} E_{\sF_{\tau}} \bigg[ 
	\int_{\tau}^{ (\tau+h)\wedge \hat \tau} L^{\phi}_{\alpha} \left(  \| X_{s	}^{\tau,\xi_{\tau};\bar\theta} -\xi_s\|_0^{\alpha} + \| X_{s	}^{\tau,\xi_{\tau};\bar\theta} -\xi_s\|_0 \right) ds
	\bigg]\\
&\geq
	\eps 
	- \limsup_{h\rightarrow 0^+}E_{\sF_{\tau}} \left[  \frac{(\hat\tau \wedge(\tau+h))-\tau}{h}   \right]
	\cdot
	 L^{\phi}_{\alpha}\cdot E_{\sF_{\tau}} \bigg[ 
	   \left| d_0( X_{(\tau+h)\wedge \hat \tau	}^{\tau,\xi_{\tau};\bar\theta}, \xi_{  \tau}) \right|^{\alpha} + d_0( X_{(\tau+h)\wedge \hat \tau	}^{\tau,\xi_{\tau};\bar\theta}, \xi_{  \tau}) 
	\bigg]
	\\
&\geq \eps-
 \limsup_{h\rightarrow 0^+} E_{\sF_{\tau}}\left[\frac{(\hat\tau \wedge(\tau+h))-\tau}{h} \right] \cdot\left\{
L^{\phi}_{\alpha}\cdot K \left((h+\sqrt{h})^{\alpha} + h+\sqrt{h}\right)
\right\}\\
&=\eps>0,
\end{align*}
}
where the exponent $\alpha$ is associated to $\phi\in \mathscr C _{\sF}^1$, $L^{\phi}_{\alpha}$ from Remark \ref{rmk-bH} and the constant $K$ from Lemma \ref{lem-SDE}. This gives rise to a contradiction. Hence, $V$ is a viscosity subsolution.\medskip

\textbf{Step 2.}
To prove that $V$ is a viscosity supersolution of \eqref{SHJB}, we argue with contradiction like in \textbf{Step 1}. To the contrary, assume that for any  $k\in \bN^+$ with $k\geq K_0$ for some $K_0\in\bN^+$, there exists $\phi\in \overline\cG V(\tau,\xi_{\tau};\Omega_{\tau},k)$ with $\tau\in \mathcal T^0$, $\Omega_{\tau}\in\sF_{\tau}$, $\mathbb P(\Omega_{\tau})>0$, and $\xi_{\tau}\in L^0(\Omega_{\tau},\sF_{\tau};\Lambda_{\tau}^0)$
such that 
there exist $\eps >0$ and $\Omega'\in\sF_{\tau}$ with $\Omega'\subset \Omega_{\tau}$, $\bP(\Omega')>0$, satisfying a.e. on $\Omega'$,
\begin{align*}
\lim_{h\rightarrow 0^+} \esssup_{
s \in (\tau,(\tau+4\tilde\delta^2) \wedge T],\,x\in B_{2\tilde\delta}(\xi_{\tau}) \cap \Lambda^{0,k;\xi_{\tau}}_{\tau,s\wedge T}	
			} 
		\{ -\mathfrak{d}_{s}\phi(s,x)-\bH(s,x,\nabla\phi(s,x) )\}  \leq -\eps.
\end{align*}

We take the interger $k>L$ and let  $\hat{\tau}$ be the stopping time corresponding to the fact $\phi\in \overline\cG V(\tau,\xi_{\tau};\Omega_{\tau},k)$. Again, we think of $\xi$ valued in $\Lambda^0_T$, with $\xi(t)=\xi_{t\wedge \tau} (t)$ for all $t\in [0,T]$.

Again, notice that, associated to $\phi\in\mathscr C_{\sF}^1$, there is a partition $0=\underline t_0<\underline t_1<\ldots<\underline t_n=T$. W.l.o.g., we assume that there exists $\tilde{\delta}\in (0,1)$ such that $2 \tilde \delta ^2 <\min_{0\leq j\leq n-1} (\underline t_{j+1}-\underline t_j)$ and $\Omega'=\{[\tau,\tau+2\tilde{\delta}^2] \subset [\underline t_j,\underline t_{j+1})\}$ for some $j\in\{0,\ldots, n-1\}$.

For each $\theta\in\cU$, define
$\tau^{\theta}=\inf\{ s>\tau: X^{\tau,\xi_{\tau};{\theta}}_s \notin B_{\tilde\delta}(\xi_{\tau}) \}$. 
Then $\tau^{\theta}  >\tau $ and moreover, setting $h = \frac{\tilde \delta ^2}{4}$ and using Chebyshev's inequality, we obtain
 \begin{align}
 E_{\sF_{\tau}}\left[1_{\{\tau^{\theta} <\tau+h\}}\right]
 &=
 	E_{\sF_{\tau}}\left[1_{\{ \max_{\tau\leq s\leq \tau +h} |X^{\tau,\xi_{\tau};\theta}(s) -\xi(\tau)| + \sqrt{h}> {\tilde\delta} \}}\right]  
\nonumber\\
&
 \leq 
 	\frac{1}{(\tilde\delta-\sqrt{h})^8}  E_{\sF_{\tau}} \left[ \max_{\tau\leq s\leq \tau +h} |X^{\tau,\xi_{\tau};\theta} (s) -\xi(\tau)|^8\right]
\nonumber\\
&
\leq 
	\frac{K^8}{(\tilde\delta-\sqrt{h})^8}   (h+\sqrt{h})^8  
	\nonumber \\
&
\leq
	\frac{256 \cdot K^8}{\tilde\delta ^8}   (h+\sqrt{h})^8  
	\nonumber\\
&\leq
\tilde{C} h^4
	\quad \text{ a.s..}
	\label{est-tau}
 \end{align}
Here, the constant $K$ is from Lemma \ref{lem-SDE} and independent of the control $\theta$, and thus, the constant $\tilde C $ is independent of $\theta$ as well.

In view of Remark \ref{rmk-bH}, Theorem \ref{thm-DPP}, Lemma \ref{lem-ito-wentzell} and estimate \eqref{est-tau}, we have for almost all $\omega\in\Omega'$, 
{\small
\begin{align*}
0
&= \lim_{h\rightarrow 0^+}
	\frac{V(\tau,\xi_{\tau})-\phi(\tau,\xi_{\tau})}{h}
\\
&= \lim_{h\rightarrow 0^+}
	\frac{1}{h} \essinf_{\theta\in\cU} E_{\sF_{\tau}}\left[
	\int_{\tau}^{\hat{\tau} \wedge (\tau+h)} 
		f(s,X_s^{\tau,\xi_{\tau};\theta},\theta(s))\,ds
		+V\left(\hat{\tau} \wedge (\tau+h), X_{\hat{\tau}\wedge (\tau+h)}^{\tau,\xi_{\tau};\theta}\right) -\phi(\tau,\xi_{\tau})
		\right]
\\
&\geq 
	\liminf_{h\rightarrow 0^+}
	\frac{1}{h} \essinf_{\theta\in\cU} E_{\sF_{\tau}}\left[
	\int_{\tau}^{\hat{\tau} \wedge (\tau+h)} 
		f(s,X_s^{\tau,\xi_{\tau};\theta},\theta(s))\,ds
		+\phi\left(\hat{\tau} \wedge (\tau+h), X_{\hat{\tau}\wedge (\tau+h)}^{\tau,\xi_{\tau};\theta}\right) -\phi(\tau,\xi_{\tau})
		\right]
\\
&= 
	\liminf_{h\rightarrow 0^+}
	\frac{1}{h} \essinf_{\theta\in\cU} E_{\sF_{\tau}}\left[
	\int_{\tau}^{\hat{\tau} \wedge (\tau+h)} 
		\left( \mathscr L^{\theta(s)}\phi\left(s,X_s^{\tau,\xi_{\tau};\theta}\right)+f(s,X_s^{\tau,\xi_{\tau};\theta},\theta(s))\right)\,ds
		\right]
\\
&\geq
	\liminf_{h\rightarrow 0^+}
	\frac{1}{h} \essinf_{\theta\in\cU} E_{\sF_{\tau}}\bigg[
		\int_{\tau}^{\tau^{\theta}\wedge (\tau+h)\wedge \hat\tau} \bigg(
		\mathscr L^{\theta(s)}\phi\left(s,X_s^{\tau,\xi_{\tau};\theta} \right)+f(s,X_s^{\tau,\xi_{\tau};\theta},\theta(s))
      		  \bigg) \,ds\\
&	\quad\quad\quad
		  -1_{\{\hat\tau >\tau^{\theta}\}\cap\{\tau+h>		\tau^{\theta}\}}   \int_{\tau}^{(\tau+h)\wedge \hat\tau} \left| 
	\mathscr L^{\theta(s)}\phi\left(s,X_s^{\tau,\xi_{\tau};\theta} \right)+f(s,X_s^{\tau,\xi_{\tau};\theta},\theta(s))
      		  \right| \,ds
		\bigg]
\\
&\geq
	\liminf_{h\rightarrow 0^+}
	\frac{1}{h} \essinf_{\theta\in\cU} E_{\sF_{\tau}}\bigg[
		\int_{\tau}^{(\tau+h)\wedge \hat\tau} \bigg(
		\mathscr L^{\theta(s)}\phi\left(s,X_{s\wedge\tau^{\theta}}^{\tau,\xi_{\tau};\theta} \right)+f(s,X_{s\wedge\tau^{\theta}}^{\tau,\xi_{\tau};\theta},\theta(s))
      		  \bigg) \,ds\\
& \quad
-  1_{\{\hat\tau>\tau^{\theta}\}\cap \{\tau+h>\tau^{\theta}\}   }
 \int_{\tau}^{(\tau+h)\wedge \hat\tau} \left| 
	\mathscr L^{\theta(s)}\phi\left(s,X_{s\wedge \tau^{\theta}}^{\tau,\xi_{\tau};\theta} \right)+f(s,X_{s\wedge \tau^{\theta}}^{\tau,\xi_{\tau};\theta},\theta(s))
      		  \right| \,ds
\\
&	\quad
		  - 1_{\{\hat\tau >\tau^{\theta}\}\cap\{\tau+h>\tau^{\theta}\}}  	   
		  \int_{\tau}^{(\tau+h)\wedge \hat\tau} \left| 
	\mathscr L^{\theta(s)}\phi\left(s,X_s^{\tau,\xi_{\tau};\theta} \right)+f(s,X_s^{\tau,\xi_{\tau};\theta},\theta(s))
      		  \right| \,ds
		\bigg]
\\
& \geq
	\eps 
	- \limsup_{h\rightarrow 0^+} \frac{1}{h} \esssup_{\theta\in\cU} \left( E_{\sF_{\tau}}\left[ 1_{\{\tau+h>\tau^{\theta}\}} 		\right]\right)^{1/2}
	\\
&	\quad\quad
		  \cdot \left( E_{\sF_{\tau}} \left| \int_{\tau}^{(\tau+h)\wedge \hat\tau} \left| 
	\mathscr L^{\theta(s)}\phi\left(s,X_{s\wedge \tau^{\theta}}^{\tau,\xi_{\tau};\theta} \right)+f(s,X_{s\wedge \tau^{\theta}}^{\tau,\xi_{\tau};\theta},\theta(s))
      		  \right| \,ds \right|^2
		\right)^{1/2}\\
&
	- \limsup_{h\rightarrow 0^+} 
		\frac{1}{h} \esssup_{\theta\in\cU} 		 \left( E_{\sF_{\tau}}\left[  
		1_{\{\tau+h>\tau^{\theta}\}}		\right]\right)^{1/2}
\\
&\quad\quad	\cdot	
		\left( E_{\sF_{\tau}} \left| \int_{\tau}^{(\tau+h)\wedge \hat\tau} \!\! \left| 
	\mathscr L^{\theta(s)}\phi\left(s,X_{s }^{\tau,\xi_{\tau};\theta} \right)+f(s,X_{s }^{\tau,\xi_{\tau};\theta},\theta(s))
      		  \right| \,ds \right|^2
		\right)^{1/2}
\\
& \geq 
\eps
- 2 \limsup_{h\rightarrow 0^+}   h^{3/2} \tilde C ^{1/2} \left( E_{\sF_{\tau}} \left[  \int_{\tau}^{(\tau+h)\wedge \hat\tau} \big| \zeta^{\phi}_s\big|^2\, ds  \right] \right)^{1/2}
\\
&= \eps>0,
\end{align*}
}
where $h=\frac{\tilde \delta^2}{4}$ and we have used the analysis in Remark \ref{rmk-bH} as well as the fact that for almost all $\omega\in\Omega'$, $\tau +h <\hat\tau$ when $h>0$ is small enough. Hence, a contradiction occurs and $V$ is a viscosity supersolution of SPHJ equation \eqref{SHJB}.  
\end{proof}

\section{Uniqueness}

Due to the  nonanticipativity constraint on the unknown function, the conventional variable-doubling techniques for deterministic Hamilton-Jacobi equations are not applicable to the viscosity solution to SPHJ equations like \eqref{SHJB}, basically because in our random setting, the extreme quadruples, as random functions of $\omega\in\Omega$,  attaining the extreme values of the penalized functional ``doubling the number of variables" fail to be $(\sF_s)_{s\geq 0}$-adapted; refer to \cite{ekren2016viscosity-1,lukoyanov2007viscosity,ren2014overview} for the applications of variable-doubling techniques to the \textit{deterministic} path-dependent PDEs. In this work, we  prove instead a comparison principle that is weak in the sense that the comparison relations are just holding in compact subspaces; the uniqueness is then derived on basis of this weak comparison principle. 

\subsection{A weak comparison principle}
\begin{prop}\label{prop-cor-cmp}
Let $(\cA1)$ hold and $u$ be a viscosity subsolution (resp. supersolution)  of SPHJ equation \eqref{SHJB}. Then there is an infinite sequence of integers $1\leq \underline k_1<\underline k_2<\cdots<\underline k_n<\cdots$ (resp., $1\leq \overline k_1<\overline k_2<\cdots<\overline k_n<\cdots$), such that for each $i\in\bN^+$, $\phi_i\in\mathscr C^1_{\sF}$ satisfying 
$\phi_i(T,x)\geq (\text{resp. }\leq) G(x)$ for all $x\in\Lambda_{T}^{0 }$
a.s. and  
\begin{align*}
\text{ess}\liminf_{(s,x)\rightarrow (t^+,y)}
	  \left\{  -\mathfrak{d}_{s}\phi_i(s,x)-\bH(s,x,\nabla \phi_i(s,x) ) \right\} \geq 0, \text{ a.s.,}
\\
\text{(resp. }
\text{ess}\limsup_{(s,x)\rightarrow (t^+,y)}
	 \left\{  -\mathfrak{d}_{s}\phi_i(s,x)-\bH(s,x,\nabla \phi_i(s,x) ) \right\} \leq 0\text{, a.s.)}
\end{align*}
for  each $t\in[0,T)$ with $y\in\Lambda_{0,t}^{0,\underline k_i}$ (resp., $y\in\Lambda_{0,t}^{0,\overline k_i}$), it holds a.s. that $u(t,x)\leq$ (resp., $\geq$) $\phi_i(t,x)$,  for each $t\in[0,T]$ with $x\in\Lambda_{0,t}^{0,\underline k_i}$.
\end{prop}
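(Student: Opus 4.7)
The plan is to argue by contradiction, combining a perturbation that upgrades the classical supersolution hypothesis on $\phi_i$ to a strict one with a first-touching-time construction on the compact forward cones $\Lambda^{0,k;\xi}_{\tau,t}$ furnished by Arzel\`a--Ascoli. Fix $i\in\bN^+$ and set $k:=\underline k_i$; the strictly increasing sequence $\{\underline k_i\}$ is produced by applying the subsolution definition with $K_0=i$, and the supersolution case is entirely parallel after swapping $u\leftrightarrow\phi$ and $\sup\leftrightarrow\inf$. For $\lambda,\mu>0$ introduce the perturbation $\phi_i^{\lambda,\mu}(t,x):=\phi_i(t,x)+\lambda(T-t)+\mu$. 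Because $\mathfrak{d}_t[\lambda(T-t)+\mu]=-\lambda$ while the vertical and It\^o derivatives of this time-piece vanish, $\phi_i^{\lambda,\mu}$ lies in $\mathscr C^1_\sF$ with the same partition and H\"older data as $\phi_i$; the hypothesis upgrades to the strict inequality $\text{ess}\liminf_{(s,x)\to(t^+,y)} E_{\sF_t}[-\mathfrak{d}_s\phi_i^{\lambda,\mu}-\bH(s,x,\nabla\phi_i^{\lambda,\mu})]\geq\lambda$ at every $y\in\Lambda^{0,k}_{0,t}$, and the terminal bound strengthens to $\phi_i^{\lambda,\mu}(T,\cdot)\geq G(\cdot)+\mu>u(T,\cdot)$ a.s. It therefore suffices to prove $u\leq\phi_i^{\lambda,\mu}$ on $\Lambda^{0,k}_{0,t}$ and let $\lambda,\mu\downarrow 0$.

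Suppose the desired inequality fails at some $(t_0,x_0)$ with $x_0\in\Lambda^{0,k}_{0,t_0}$, and put $\Omega_0:=\{u(t_0,x_0)>\phi_i^{\lambda,\mu}(t_0,x_0)\}\in\sF_{t_0}$ of positive measure. Consider the $\sF_t$-adapted process $M(t):=\sup_{y\in\Lambda^{0,k;x_0}_{t_0,t}}(u-\phi_i^{\lambda,\mu})(t,y)$ for $t\in[t_0,T]$; the supremum is attained by compactness of $\Lambda^{0,k;x_0}_{t_0,t}$ and continuity of $u-\phi_i^{\lambda,\mu}\in\cS^2(\Lambda;\bR)$, and $M$ is a.s.\ continuous in $t$ via the Lipschitz-in-$d_0$ continuity of $u-\phi_i^{\lambda,\mu}$ together with the canonical constant-extension embedding $\Lambda^{0,k;x_0}_{t_0,t}\hookrightarrow\Lambda^{0,k;x_0}_{t_0,t'}$. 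On $\Omega_0$ we have $M(t_0)>0$ while $M(T)\leq-\mu<0$, so the stopping time $\tau^*:=\inf\{t\geq t_0:M(t)\leq 0\}$ lies in $(t_0,T)$ with $M(\tau^*)=0$. A measurable selection on the compact section yields $\xi^*\in L^0(\Omega_0,\sF_{\tau^*};\Lambda^{0,k;x_0}_{t_0,\tau^*})$ realising $(u-\phi_i^{\lambda,\mu})(\tau^*,\xi^*)=0$. Set $\hat\tau_k:=\inf\{t>\tau^*:\sup_{y\in\Lambda^{0,k;\xi^*}_{\tau^*,t}}(u-\phi_i^{\lambda,\mu})(t,y)>0\}\wedge T$; the crucial claim, treated below, is that $\hat\tau_k>\tau^*$ a.s.\ on $\Omega_0$, so $\hat\tau_k\in\mathcal T^{\tau^*}_+$.

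With these ingredients $\phi_i^{\lambda,\mu}$ is admissible as a test function at $(\tau^*,\xi^*)$ on $\Omega_0$: by the definitions of $\tau^*,\xi^*,\hat\tau_k$ one has $\phi_i^{\lambda,\mu}-u\geq 0$ on $\Lambda^{0,k;\xi^*}_{\tau^*,\bar\tau\wedge\hat\tau_k}$ for every $\bar\tau\in\mathcal T^{\tau^*}$, so that the essinf over $\bar\tau$ equals $0$ (attained at $\bar\tau=\tau^*$), giving $\phi_i^{\lambda,\mu}\in\underline{\cG}u(\tau^*,\xi^*;\Omega_0,k)$. The viscosity subsolution inequality \eqref{defn-vis-sub} then forces $\text{ess}\liminf_{(s,x)\to(\tau^{*+},\xi^*)} E_{\sF_{\tau^*}}[-\mathfrak{d}_s\phi_i^{\lambda,\mu}-\bH(s,x,\nabla\phi_i^{\lambda,\mu})]\leq 0$ on $\Omega_0$. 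Since $\xi^*\in\Lambda^{0,k;x_0}_{t_0,\tau^*}\subset\Lambda^{0,k}_{0,\tau^*}$, the strict supersolution bound on $\phi_i^{\lambda,\mu}$ simultaneously makes the same essential liminf $\geq\lambda>0$, a contradiction completing the subsolution case; the supersolution case is entirely symmetric with inverted signs.

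The principal technical obstacle is proving $\hat\tau_k>\tau^*$ strictly a.s.\ on $\Omega_0$: a priori the process $M^{\xi^*}(t):=\sup_{y\in\Lambda^{0,k;\xi^*}_{\tau^*,t}}(u-\phi_i^{\lambda,\mu})(t,y)$, which vanishes at $\tau^*$, might oscillate back into positive values arbitrarily close to $\tau^*$. To rule this out I would exploit the H\"older regularity of $\mathfrak{d}_t\phi_i,\nabla\phi_i,\mathfrak{d}_\omega\phi_i$ on the partition interval $(\underline t_j,\underline t_{j+1})$ containing $\tau^*$ (Definition \ref{defn-testfunc}(iii)) together with the It\^o--Kunita expansion of $\phi_i^{\lambda,\mu}$ along Lipschitz paths in $\Lambda^{0,k;\xi^*}_{\tau^*,\tau^*+h}$ (a mild extension of Lemma \ref{lem-ito-wentzell} from SDE trajectories to admissible paths in $\Lambda^{0,k;\xi^*}_{\tau^*,\cdot}$); one then shows that if oscillation persisted, iterating the same construction at the subsequent touching points would yield a monotone sequence of times bounded by $T$ whose limit produces a touching incompatible with the strict slack $\lambda$. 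The exceptional case where $\tau^*$ lands on a partition breakpoint $\underline t_j$ of $\phi_i$ is handled either by a small time-perturbation or by further restriction of $\Omega_0$, exploiting the finiteness of the partition.
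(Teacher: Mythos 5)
Your high-level structure is sound — perturb $\phi_i$ to obtain a strict supersolution, argue by contradiction, exploit compactness of $\Lambda^{0,k}_{0,t}$ and measurable selection — but the gap you flag yourself, that $\hat\tau_k>\tau^*$ a.s.\ on $\Omega_0$, is not a peripheral technicality: it is the crux of the proof, and your sketch for closing it will not work. The difficulty is that $\tau^*=\inf\{t\geq t_0:M(t)\leq 0\}$ is a \emph{backward-looking} criterion (the first time the running maximum of $u-\phi^{\lambda,\mu}_i$ drops to zero), whereas the tangency required by $\underline{\cG}u(\tau,\xi;\Omega_\tau,k)$ is \emph{forward-looking}: you need $\phi^{\lambda,\mu}_i-u\geq 0$ on the cone $\Lambda^{0,k;\xi^*}_{\tau^*,\bar\tau\wedge\hat\tau_k}$ for all $\bar\tau\in\cT^{\tau^*}$, uniformly enough to make the essential infimum equal zero. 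Nothing in the definition of $\tau^*$ controls the future of $M^{\xi^*}$. Indeed, the heuristic runs the \emph{wrong} way: the subsolution inequality for $u$ together with your strict supersolution inequality $-\mathfrak d_s\phi^{\lambda,\mu}_i-\bH(\phi^{\lambda,\mu}_i)\geq\lambda$ for $\phi^{\lambda,\mu}_i$ should push $u-\phi^{\lambda,\mu}_i$ \emph{upward} at a touching point, which means $M^{\xi^*}$ wants to become positive immediately after $\tau^*$, so $\hat\tau_k=\tau^*$ is the expected generic outcome, not a pathology. Iterating the construction at ``subsequent touching points'' therefore does not terminate in the way you hope, and the It\^o--Kunita expansion offers no help because $u$ has no classical regularity whatever along the cone.

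The paper avoids this problem entirely by replacing the first-hitting-zero stopping time with an \emph{optimal stopping} (Snell envelope) construction. Set $Y(s)$ to be the running maximum of $(\phi_i-u)^+$ over the compact cone, \emph{plus} a perturbation $\frac{\delta(s-t)}{3(T-t)}$ that is increasing in $s$, and let $Z$ be the Snell envelope of $Y$. The touching stopping time $\tau^{\overline k_i}=\inf\{s\geq t:Y(s)=Z(s)\}$ then comes with the supermartingale inequality $Y(\tau^{\overline k_i})=Z(\tau^{\overline k_i})\geq E_{\sF_{\tau^{\overline k_i}}}[Y(\bar\tau\wedge\hat\tau^{\overline k_i})]$ for every future stopping time $\bar\tau$, which is exactly the forward-looking estimate that the essential supremum condition in $\overline{\cG}u$ requires. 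To make the touching exact, the test function is taken to be $\Phi_i=\phi_i+\frac{\delta(s-t)}{3(T-t)}-E_{\sF_s}[Y(\tau^{\overline k_i})]$ — a martingale correction you omit and which is again furnished for free by the optimal-stopping structure. Finally $\hat\tau^{\overline k_i}>\tau^{\overline k_i}$ on $\Omega_{\tau^{\overline k_i}}=\{\tau^{\overline k_i}<T\}$ follows from the chain $Y(\tau^{\overline k_i})=Z(\tau^{\overline k_i})\geq E_{\sF_{\tau^{\overline k_i}}}[Y(T)]=\delta/3$, which forces $(\phi_i-u)^+(\tau^{\overline k_i},\xi^{\overline k_i}_{\tau^{\overline k_i}})>0$ whenever $\tau^{\overline k_i}<T$; note it is precisely because the perturbation is \emph{increasing} in $s$ that the comparison $E_{\sF_t}Y(T)=\delta/3<\delta\leq Y(t)$ detects $\{\tau^{\overline k_i}<T\}$ with positive probability. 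Your strictly decreasing perturbation $\lambda(T-s)+\mu$ is adequate for the PDE slack, but it does not play the bookkeeping role that the increasing perturbation plays in the Snell envelope comparison. In short: the contradiction you aim for is right, but the stopping time and test function must be built from the Snell envelope of $Y$, not from the first time the running maximum crosses zero.
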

\begin{proof}
We prove the case when $u$ is a viscosity supersolution, then the proof for the viscosity subsolution will be following similarly. First, by Definition \ref{defn-viscosity} and Remark \ref{rmk-defn}, the viscosity supersolution $u$ is associated to an infinite sequence of integers $1\leq \overline k_1<\overline k_2<\cdots<\overline k_n<\cdots$. Given $i\in\bN^+$, suppose that, to the contrary, there holds $u(t,\bar x_t)<\phi_i(t,\bar x_t)$ with a positive probability  at some point $(t,\bar x_t) $ with $t\in[0,T)$ and $\bar x_t\in \Lambda_{0,t}^{0,\overline k_i}$. 
Thus, we have  $\bar x_t\in \Lambda^{0,\overline k_i;\bar \xi_0}_{0,t}$ for some $\bar \xi_0 \in  \bR^d$. W.l.o.g., we assume $\bar \xi_0=0$. Thus, there exists $\delta>0$ such that $\bP( \overline\Omega_t)>0$ with $\overline\Omega_t:=\{\phi_i(t,\bar x_t)-u(t,\bar x_t) >\delta  \}$.  


By the compactness of $\Lambda^{0,{\overline k_i};0}_{0,t}$ in $\Lambda_t^0$ and the measurable selection theorem, there exists $\xi^{\overline k_i}_t\in L^0(\overline\Omega_t,\sF_t;\Lambda^{0,{\overline k_i};0}_{0,t})$ such that
$$
\phi_i(t,\xi^{\overline k_i}_t)- u(t,\xi^{\overline k_i}_t) =\max_{x_t\in   \Lambda^{0,{\overline k_i};0}_{0,t} } \{  \phi_i(t,x_t)-u(t,x_t) \}\geq \delta\text{ for almost all }\omega\in\overline\Omega_t.
$$ 
   W.l.o.g., we take $\overline\Omega_t=\Omega$ in what follows.

For each $s\in(t,T]$, choose an $\sF_s$-measurable and $ \Lambda^{0,{\overline k_i};\xi^{\overline k_i}_t}_{t,s}$-valued variable $\xi^{\overline k_i}_s$ such that  
\begin{align}
\left(\phi_i(s,\xi^{\overline k_i}_s)- u(s,\xi^{\overline k_i}_s)  \right)^+=\max_{x_s\in \Lambda^{0,{\overline k_i};\xi^{\overline k_i}_t}_{t,s}   }   \left( \phi_i(s,x_s)-u(s,x_s) \right)^+ , \quad \text{a.s.,}\label{eq-maxima}
\end{align}  
and set
$Y^{\overline k_i}(s)
=
	(\phi_i(s,\xi^{\overline k_i}_s)-u(s,\xi^{\overline k_i}_s) )^+ +\frac{\delta (s-t)}{3(T-t)}
	$, and 
$Z^{\overline k_i}(s)
= 
	\esssup_{\tau\in\cT^s} E_{\sF_s} [Y^{\overline k_i}({\tau})]$.
Here, recall that  $\mathcal{T}^s$ denotes the set of stopping times valued in $[s,T]$. As $(\phi_i-u)^+\in \cS^2(\Lambda^0;\bR)$, there follows obviously the time-continuity of  
$$
	\max_{x_s\in \Lambda^{0,{\overline k_i};\xi^{\overline k_i}_t}_{t,s}  }   \left(\phi_i(s,x_s)-u(s,x_s) \right)^+, \quad \text{for }s\in[t,T],
$$ 
and thus that of
$\left( \phi_i(s,\xi^{\overline k_i}_s)-u(s,\xi^{\overline k_i}_s) \right)^+$, although the continuity of process $(\xi^{\overline k_i}_s)_{s\in [t,T]}$ (as path space-valued process)  can not be ensured. Therefore, the process $(Y^{\overline k_i}(s))_{t\leq s \leq T}$ has continuous trajectories. 
Define $\tau^{\overline k_i}=\inf\{s\geq t:\, Y^{\overline k_i}(s)=Z^{\overline k_i}(s)\}$. In view of the optimal stopping theory, observe that
$$
E_{\sF_t}\left[Y^{\overline k_i}(T) \right]
=\frac{\delta}{3}
	<\delta \leq 
	Y^{\overline k_i}(t) \leq Z^{\overline k_i}(t)=E_{\sF_t} \left[ Y^{\overline k_i}({\tau^{\overline k_i}}) \right] =E_{\sF_t}\left[Z^{\overline k_i}({\tau^{\overline k_i}})\right],
$$
which gives that $\bP(\tau^{\overline k_i}<T)>0$. As 
\begin{align}
	(\phi_i(\tau^{\overline k_i},\xi^{\overline k_i}_{\tau^{\overline k_i}}) - u(\tau^{\overline k_i},\xi^{\overline k_i}_{\tau^{\overline k_i}}) )^+ 
	+\frac{\delta (\tau^{\overline k_i}-t)}{3(T-t)}
=Z^{\overline k_i}({\tau^{\overline k_i}}) \geq E_{\sF_{\tau^{\overline k_i}}}[Y^{\overline k_i}(T)]= \frac{\delta}{3},
\label{relation-tau}
\end{align}
we have 
\begin{align*}
\bP(( \phi_i(\tau^{\overline k_i},\xi^{\overline k_i}_{\tau^{\overline k_i}}) - u(\tau^{\overline k_i},\xi^{\overline k_i}_{\tau^{\overline k_i}}))^+>0)>0. 
\end{align*}
 Define 
$$\hat\tau^{\overline k_i}=\inf\{s\geq\tau^{\overline k_i}:\, (\phi_i(s,\xi^{\overline k_i}_s)-u(s,\xi^{\overline k_i}_{s})  )^+\leq 0\}.$$ Obviously, $\tau^{\overline k_i}\leq\hat\tau^{\overline k_i}\leq T$.
 Put $\Omega_{\tau^{\overline k_i}}=\{\tau^{\overline k_i}<\hat\tau^{\overline k_i}\}$. Then $\Omega_{\tau^{\overline k_i}}\in \sF_{\tau^{\overline k_i}}$ and in view of relation \eqref{relation-tau}, and the definition of $\hat\tau^{\overline k_i}$, we have $\Omega_{\tau^{\overline k_i}} =\{\tau^{\overline k_i}<T\}$ and  $\bP(\Omega_{\tau^{\overline k_i}})>0$.

Set 
$$\Phi_i(s,x_s)=\phi_i(s,x_s)  +\frac{\delta (s-t)}{3(T-t)}-E_{\sF_s}\left[Y^{\overline k_i}({\tau^{\overline k_i}})\right],\quad s\in[0,T].
$$
 Notice that the process $\left(E_{\sF_s}\left[Y^{\overline k_i}({\tau^{\overline k_i}})\right]\right)_{s\in[0,T]}$ belongs to $\mathscr{C}^1_{\sF}$. Indeed, $E_{\sF_s}\left[Y^{\overline k_i}({\tau^{\overline k_i}})\right]$ is defined on $\Omega\times[0,T]$ independent of $x_s\in\Lambda$ and for $s\leq {\tau^{\overline k_i}}$, the martingale representation under the assumed Brownian filtration $(\sF_r)_{r\geq 0}$ gives the well-defined $\mathfrak{d}_{\omega}\left(E_{\sF_s}\left[Y^{\overline k_i}({\tau^{\overline k_i}})\right]\right)$.
 Thus, $\Phi_i\in\mathscr{C}^1_{\sF}$ since $\phi_i \in \mathscr{C}^1_{\sF}$. For each $\bar\tau\in\cT^{\tau^{\overline k_i}}$, 
 we have for almost all $\omega\in\Omega_{\tau^{\overline k_i}}$, 
\begin{align*}
\left(\Phi_i-u\right)(\tau^{\overline k_i},\xi^{\overline k_i}_{\tau^{\overline k_i}})
=0=Z^{\overline k_i}({\tau^{\overline k_i}})-Y^{\overline k_i}({\tau^{\overline k_i}})
&\geq 
E_{\sF_{\tau^{\overline k_i}}}\left[Y^{\overline k_i}({\bar\tau\wedge \hat{\tau}^{\overline k_i}}) \right] - Y^{\overline k_i}({\tau^{\overline k_i}})
\\
&\geq 
 E_{\sF_{\tau^{\overline k_i}}} \left[ \max_{y\in  \Lambda^{0,{\overline k_i};  \xi^{\overline k_i}_{\tau^{\overline k_i}}}_{\tau^{\overline k_i}, \bar\tau\wedge\hat\tau^{\overline k_i}}} (\Phi_i-u)(\bar\tau\wedge\hat\tau^{\overline k_i},y)   \right],
\end{align*}
where we have used the obvious relation $\Lambda^{0,{\overline k_i};  \xi^{\overline k_i}_{\tau^{\overline k_i}}}_{\tau^{\overline k_i}, \bar\tau\wedge\hat\tau^{\overline k_i}} \subset \Lambda^{0,{\overline k_i};\xi^{\overline k_i}_t}_{t, \bar\tau\wedge\hat\tau^{\overline k_i} }$.
This together with the arbitrariness of $\bar\tau$ implies that $\Phi_i\in \overline{\cG} u(\tau^{\overline k_i},\xi^{\overline k_i}_{\tau_{\overline k_i}};\Omega_{\tau^{\overline k_i}},{\overline k_i})$. In view of the correspondence between the viscosity supersolution $u$ and the infinite sequence $\{{\overline k_1}, \overline k_2,\cdots,\overline k_n,\cdots\}$,   we have   for almost all $\omega\in\Omega_{\tau^{{\overline k_i}}}$, 
\begin{align*}
0
&\leq
	 \text{ess}\limsup_{(s,x)\rightarrow ((\tau^{\overline k_i})^+,\,\, \xi^{{\overline k_i}}_{\tau^{{\overline k_i}}})}
	  \left\{ -\mathfrak{d}_{s}\Phi_i(s,x)-\bH(s,x,\nabla \Phi_i(s,x) ) \right\}
\\
&=
	-\frac{\delta}{3(T-t)}
	+ \text{ess}\limsup_{(s,x)\rightarrow ((\tau^{{\overline k_i}})^+,\,\, \xi^{{\overline k_i}}_{\tau^{{\overline k_i}}})}
	  \left\{  
	-\mathfrak{d}_{s}\phi_i(s,x)-\bH(s,x,\nabla \phi_i(s,x)  ) \right\}
\\
&\leq -\frac{\delta}{3(T-t)} ,
\end{align*}
which is a contradiction.
\end{proof}

\subsection{Uniqueness} \label{subsection-uniqueness}
In addition to Assumption $(\cA 1)$, we assume the joint time-space continuity, i.e.,\\[5pt]
$(\cA 2)$ for each $v\in U$,  $f(\cdot,\cdot,v),\beta^i(\cdot,\cdot,v)\in\cS ^{\infty} (\Lambda; {\mathbb R })$, for $i=1,\dots,d$.\\

 We may approximate the coefficients $\beta,\,f$, and $G$ via regular functions.
\begin{lem}\label{lem-approx}
 Let $(\cA1)$ hold. For each $\eps>0$, there exist partition $0=t_0<t_1<\cdots<t_{N-1}<t_N=T$ for some $N>3$ and functions 
 {\small
$$(G^N,f^N,\beta^N)\in C^{3}(\bR^{ N\times (d+m)+d})\times C( U;C^3([0,T]\times\bR^{(m+d)\times N+d})) \times C(U;C^3([0,T]\times\bR^{(m+d)\times N+d}))$$  
}
such that for each $k\in \bN^+$, 
\begin{align*}
& 
	f^{\eps}_k(t) :=\esssup_{(x,v)\in\Lambda_{0,t}^{0,k}\times U}
		\left|f^N( W({t_1\wedge t}),\ldots, W({t_N\wedge t}),t,x({t_0\wedge t}),\ldots, x({t_N\wedge t}),v)-f(t,x,v)\right|,
\\
&
 	\beta^{\eps}_k(t) :=\esssup_{(x,v)\in \Lambda_{0,t}^{0,k} \times U}
		\left|\beta^N( W({t_1\wedge t}),\ldots, W({t_N\wedge t}),t,x({t_0\wedge t}),\ldots, x({t_N\wedge t}),v)-\beta(t,x,v)\right|,
\end{align*}
and $G^{\eps}_k:=\esssup_{x\in\Lambda_T^{0,k}} \left|G^N( W({t_1}),\ldots, W({t_N}),x(t_0),\ldots,x(t_N))-G(x) \right|$
 are $\sF_t$-adapted with
\begin{align}
	\left\| G^{\eps}_k  \right\|_{L^2(\Omega,\sF_T;\bR)} + \left\| f^{\eps}_k  \right\|_{L^2(\Omega\times[0,T];\bR)}   + \left\| \beta^{\eps}_k  \right\|_{L^2(\Omega\times[0,T];\bR)}    <\eps(1+k).
	\label{approx-error}
\end{align}
Moreover, $G^N$, $f^N$, and $\beta^N$ are uniformly Lipschitz-continuous in the space variable $x$ with an identical Lipschitz-constant $L_c$ independent of $N$, $k$, and $\eps$.
\end{lem}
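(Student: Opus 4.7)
The plan is to combine three approximation procedures: a time-discretization of the path argument, a projection of the $\omega$-dependence onto cylinder $\sigma$-algebras generated by finitely many Wiener increments, and a terminal Euclidean mollification for regularity. Fix $\eps>0$ and a partition $0=t_0<t_1<\cdots<t_N=T$ of mesh $\Delta:=T/N$ to be chosen. Because any $x\in\Lambda^{0,k}_{0,t}$ is Lipschitz with constant at most $k$, its piecewise-linear interpolation $\pi_N(x)$ through the nodal values $(x(t_0\wedge t),\ldots,x(t_N\wedge t))$ satisfies $\|x-\pi_N(x)\|_0\leq k\Delta$. By the Lipschitz hypothesis $(\cA1)$(iii),
\begin{align*}
\esssup_{x\in\Lambda_T^{0,k}}|G(\pi_N(x))-G(x)| +\sup_{v\in U}\esssup_{x\in\Lambda^{0,k}_{0,t}}|g(t,\pi_N(x),v)-g(t,x,v)|\leq 2Lk\Delta,
\end{align*}
so that the first, path-discretization, error is $\leq \eps k/6$ as soon as $\Delta\leq \eps/(12L)$; crucially, $N=N(\eps)$ is independent of $k$.

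After this step the remaining coefficient depends on $\omega$ only through a bounded $\sF_t$-measurable random field indexed by $(x(t_i\wedge t))_i\in\bR^{d(N+1)}$ and $v\in U$, jointly continuous in $(x,v)$ by $(\cA1)$(ii)-$(\cA2)$. Since $\{\sF_t\}$ is the augmentation of the Brownian filtration, the L\'evy martingale convergence theorem applied along refinements of the partition delivers $L^2(\Omega)$-convergence of the conditional expectations $E[\,\cdot\,|\,W(t_1\wedge t),\ldots,W(t_M\wedge t)]$ to the identity as $M\to\infty$; hence the refined coefficient is, up to an arbitrarily small $L^2$-error, a Borel function of $(W(t_i\wedge t))_{1\leq i\leq N}$ via the Doob-Dynkin lemma. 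To upgrade this to an $L^2$-bound that is uniform over $\Lambda^{0,k}_{0,t}\times U$, exploit the compactness of $\Lambda^{0,k}_{0,t}$ (Arzel\`a-Ascoli) and the continuity in $v$: cover $\Lambda^{0,k}_{0,t}$ by finitely many $\eps$-balls, apply $L^2$-convergence at each centre, and dominate the residual via the modulus of continuity provided by $(\cA2)$. For $N$ large enough the second error is $\leq \eps/6$. Finally, convolve the Borel function in its Euclidean arguments $(w,x)\in\bR^{mN+d(N+1)}$ with a smooth compactly supported mollifier of bandwidth so small that the contribution in $L^2(\Omega)$ is $\leq \eps/6$; this yields the required $C^3$-regularity while leaving the parameter $v$ untouched, so that continuity in $v$ is inherited from the preceding step.

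Summing the three errors for each of $G$, $f$, $\beta$ gives the bound $\eps(1+k)$ in \eqref{approx-error}. The evaluation map $x\mapsto(x(t_0),\ldots,x(t_N))$ has operator norm $1$ from $(\Lambda_T,\|\cdot\|_0)$ into $(\bR^{d(N+1)},\|\cdot\|_{\infty})$, and mollification preserves Lipschitz constants; hence $G^N,f^N,\beta^N$ are Lipschitz in $(x_0,\ldots,x_N)$ with a constant $L_c$ depending only on $L$, independent of $N$, $k$, and $\eps$. The main obstacle is the projection step: converting the pointwise $L^2$-convergence of conditional expectations into an estimate that is uniform over the uncountable family $\Lambda^{0,k}_{0,t}\times U$; this is where the joint continuity from $(\cA2)$ together with the compactness of the Lipschitz path class is used decisively, via a finite-cover/modulus-of-continuity argument.
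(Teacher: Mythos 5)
Your proposal shares the core structure of the paper's argument -- discretize the path to its values at finitely many partition points, project the $\omega$-dependence onto the cylinder $\sigma$-algebra generated by the Wiener values at those points, and then smooth -- and your piecewise-linear nodal interpolation together with the explicit bound $\|x-\pi_N(x)\|_0\leq k\Delta$ isolates cleanly why the error scales like $(1+k)$. There is, however, a genuine gap concerning the time variable. You replace $f(t,x_t,v)$ by $f(t,\pi_N(x),v)$, which still carries the \emph{original} dependence on $t$ (merely measurable, or continuous under $(\cA 2)$, but certainly not smooth), and you then explicitly confine the mollification to the Euclidean arguments $(w,x)\in\bR^{mN+d(N+1)}$. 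But the target class $C(U;C^3([0,T]\times\bR^{(m+d)\times N+d}))$ requires third-order differentiability \emph{jointly} in $(t,w,x)$, which your construction does not produce. The paper avoids this by first freezing the coefficient on each interval $(\underline t_j,\underline t_{j+1}]$, i.e.\ replacing $f(\cdot,t,x_t,v)$ by $f(\cdot,t_j,x_{t_j},v)$, so that the $t$-dependence collapses to indicator functions $1_{(t_j,t_{j+1}]}(t)$, which are then replaced by smooth bump functions $\varphi_j$. That piecewise-constant-in-time reduction is essential for obtaining $C^3$-regularity in $t$ and is missing from your scheme; mollifying in $(w,x)$ alone cannot supply it.

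A secondary point concerns the $\omega$-projection. You go via conditional expectations, L\'evy's martingale convergence, Doob--Dynkin, and a finite cover of the compact set $\Lambda^{0,k}_{0,t}$, whereas the paper first passes to simple random variables of separable form $\sum_i 1_{A_i^j}(\omega)\,h_i^j(x_{t_j},v)$ and then approximates each indicator $1_{A_i^j}$ by a smooth cylinder function of finitely many Wiener values. The separable form decouples the $\omega$-dependence from $(x,v)$ at the outset, so the uniformity over $(x,v)$ -- which you correctly flag as the main obstacle in your route -- never arises. If you want to keep the conditional-expectation route, note also that $U\subset\bR^{\bar m}$ is not assumed compact, so a finite cover of $\Lambda^{0,k}_{0,t}\times U$ is not available; you would have to argue via the uniform bound and uniform Lipschitz estimate in $(\cA 1)$(iii) that the modulus of $L^2$-convergence of the conditional expectations is uniform in $v$, rather than appealing to compactness in $v$.
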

  Lemma \ref{lem-approx} may be proved with density arguments that are more or less standard; the sketch of the proof is given for the reader's reference.
  
  \begin{proof}[Sketched proof of Lemma \ref{lem-approx}]
  We consider the approximations for the function $f$.  First,
  the joint time-space continuity in Assumption $(\cA 2)$ and the dominated convergence theorem allow us to approximate $f$ via random functions of the form:
  \begin{align}
   \bar f^l(\omega,t,x,v)= f(\omega,0,x_0,v)1_{[0,t_1]}(t)+\sum_{j=1}^{l-1}f(\omega,t_{j},x_{t_{j}},v) 1_{(t_{j},t_{j+1}]}(t), \quad
  t\in[0,T], \label{approximate-1}
  \end{align}
  where $0=t_0< t_1< \cdots< t_l<T$. 
  
   
  Next, for each $j\geq 0$, the function $f(\omega,t_{j},x_{t_j},v)$ may be approximated \textit{monotonically} (see \cite[Lemma 1.2, Page 16]{da2014stochastic} for instance) by simple random variables of the following form:
  $$
  \sum_{i=1}^{l_j}1_{A_i^j} (\omega) h^j_i(x_{t_j},v),\quad \text{with }h_i^j\in C(\Lambda_{t_j}\times U),\quad A_i^j\in\sF_{t_{j}}, \quad i=1,\ldots, l_j,
  $$
  and  by \cite[Lemma 4.3.1., page 50]{oksendal2003stochastic}, each $1_{A_i^j}$ may be approximated in $L^2(\Omega,\sF_{t_{j}})$ by functions in the following set
  $$
  \{g(W({\tilde t_1}),\ldots,W({\tilde t_{l^j_i}}) ):\, g\in  C^{\infty}_c(\bR^{l_i^j\times m}),\,\tilde t_r\in[0,t_{j}],\,r=1,\ldots,l_i^j \},
  $$ 
  where $C^{\infty}_c(\bR^{l_i^j\times m})$ denotes the space of infinitely differentiable functions with compact supports in $ \bR^{l_i^j\times m}$.  Moreover, recalling that for each $x_{t_j}\in \Lambda^0_{t_j}$, 
  \begin{align}
 &\lim_{M\rightarrow \infty} \max_{s\in [0,t_j]} 
 \left|
 	x_{t_j}(s)- P^M(x_{t_j})(s) \right|=0, \text{ with }  \label{approximate-2}
\\
&
	P^M(x_{t_j})(s) := \sum_{n=1}^{2^M} x_{t_j}\left(\frac{(n-1)t_j}{2^M}\right)1_{[\frac{(n-1)t_j}{2^M},\frac{nt_j}{2^M})} (s) 
	+x_{t_j}(t_j)1_{\{t_j\}}(s), \quad M\in\bN^+, \nonumber
  \end{align}
  we may have each function $h^j_i(x_{t_j},v)$ be approximated by 
  $$\tilde{h}^j_i \left(x_{t_j}(0),\ldots,x_{t_j}\left(\frac{(2^M-1)t_j}{2^M}\right),x_{t_j}(t_j),v\right):   =h^j_i(P^M(x_{t_j}),v),$$
  and as a continuous function lying in $C(\bR^{(2^M+1)\times d}\times U)$,  each function $\tilde h^j_i$ may be approached by infinitely differentiable functions (denoted by itself) lying in $C( U; C^{\infty}(\bR^{(2^M+1)\times d}))$ with a uniform identical Lipschitz constant w.r.t. the space variable $x$. In addition, each $1_{(t_{j-1},t_j]}$ may be increasingly approximated by compactly-supported nonnegative functions $\varphi_j\in C^{\infty}((t_{j-1},T];\bR)$. 
  
  To sum up, we may put all the partitions together, and the function $f$ may be approximated  by random functions of the following form:
  \begin{align}
  &f^N( W({\bar t_1\wedge t}),\ldots, W({\bar t_N\wedge t}), ,t,x(0), x({\bar t_1\wedge t}),\ldots, x({\bar t_N\wedge t}),v)
  \notag\\
  &=\sum_{j=1}^{\bar l}\sum_{i=1}^{\bar l_j} g_i^j(W({\bar t_1}),\ldots, W({\bar t_j }))\tilde h_i^j(x(0), x({\bar t_1 }),\ldots, x({\bar t_j}),v) \varphi_j(t), \label{approx-f}
    \end{align}
  where $0=\bar t_0 <\bar t_1<\cdots<\bar t_{N-1}<\bar t_{N}=T$, and $g_i^j,\,\tilde h_i^j (\cdot, v),\, \varphi_j$ are smooth functions\footnote{When $t\in (\bar t_j,\bar t_{j+1})$, we write the dependence of $f^N$ on $W(t)$ and $x(t)$ just for notational convenience, though the defined function $f^N$ does not nontrivially depend on $W(t)$ and $x(t)$ for $t\in (\bar t_j,\bar t_{j+1})$; similarly,  the functions $g_i^j$ and $\tilde h_i^j$ may not nontrivially depend on some particular input(s) in expression \eqref{approx-f}. }. The required approximations for $G$ and $\beta$ are following in a similar way. Just note that the approximation error in \eqref{approx-error} is given as $\eps(1+k)$ depending on $k$ due to the approximations of  the paths in Lipschitz-continuous function spaces involved in \eqref{approximate-1} and \eqref{approximate-2}.
  \end{proof}

\begin{thm}\label{thm-main}   
 Let Assumptions $(\cA 1)$ and $(\cA 2)$ hold. The viscosity solution to SPHJ equation \eqref{SHJB} is unique.
 \end{thm}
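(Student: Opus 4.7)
The plan is to build, for each small parameter $\eps>0$ and each compactness index $k\in\bN^+$, classical super- and sub-solutions of SPHJ equation~\eqref{SHJB} on the compact subspace $\Lambda^{0,k}_{0,t}$ that approximate $V$. By the weak comparison principle (Proposition~\ref{prop-cor-cmp}), any viscosity solution $u$ is then sandwiched between these; passing to the limits $\eps\downarrow 0$ and $k\to\infty$ and using the density of $\cup_k\Lambda^{0,k}_{0,t}$ in $\Lambda^0_t$ together with path-continuity yields $u=V$.

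Fix $\eps>0$ and apply Lemma~\ref{lem-approx} to obtain smooth coefficients $(G^N, f^N, \beta^N)$ whose $\omega$-dependence factors through the finite Wiener vector $(W(t_1),\ldots,W(t_N))$, and let $V^{\eps}$ be the value function of the control problem \eqref{Control-probm}--\eqref{state-proces-contrl} driven by these data. Between consecutive break points $t_j<t_{j+1}$ the controlled system is Markovian in the enlarged state $(x_t,W(t_1\wedge t),\ldots,W(t_N\wedge t))$ with $C^3$ coefficients, so standard HJB theory should produce $V^{\eps}\in \mathscr C^1_{\sF}$ with the partition $\{t_j\}$, and $V^{\eps}$ being a classical solution of the smoothed SPHJ equation with Hamiltonian $\bH^{\eps}$; the bound $|\nabla V^{\eps}|\leq L_V$ is inherited from the smoothed analogue of Proposition~\ref{prop-value-func}(iv).

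Let $u$ be any viscosity solution, with associated sequences $\{\underline k_i\},\{\overline k_i\}$ from Proposition~\ref{prop-cor-cmp}. For $k\in\{\underline k_i\}$ (resp.\ $k\in\{\overline k_i\}$) define
\begin{align*}
\Phi^{\pm}_{\eps,k}(t,x_t):=V^{\eps}(t,x_t)\pm E_{\sF_t}\bigg[G^{\eps}_k + \int_t^T \bigl(L_V\beta^{\eps}_k(s)+f^{\eps}_k(s)\bigr)\,ds\bigg],
\end{align*}
where $G^{\eps}_k, f^{\eps}_k, \beta^{\eps}_k$ are the error controls from Lemma~\ref{lem-approx}. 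The correction is constant in $x$, so $\Phi^{\pm}_{\eps,k}\in\mathscr C^1_{\sF}$ with $\nabla\Phi^{\pm}_{\eps,k}=\nabla V^{\eps}$ and $\mathfrak{d}_t\Phi^{\pm}_{\eps,k}=\mathfrak{d}_t V^{\eps}\mp (L_V\beta^{\eps}_k+f^{\eps}_k)$. Combined with the elementary bound $|\bH(t,x,\nabla V^{\eps})-\bH^{\eps}(t,x,\nabla V^{\eps})|\leq L_V\beta^{\eps}_k(t)+f^{\eps}_k(t)$ valid on $\Lambda^{0,k}_{0,t}$ and $|G^N-G|\leq G^{\eps}_k$ on $\Lambda_T^{0,k}$, one checks that $\Phi^{+}_{\eps,k}$ (resp.\ $\Phi^{-}_{\eps,k}$) satisfies the classical super-(resp.\ sub-)solution hypothesis of Proposition~\ref{prop-cor-cmp} for the original Hamiltonian $\bH$ on $\Lambda^{0,k}_{0,t}$. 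Hence
\begin{align*}
\Phi^{-}_{\eps,\overline k_i}(t,x_t)\leq u(t,x_t)\leq \Phi^{+}_{\eps,\underline k_i}(t,x_t),\quad x_t\in\Lambda^{0,\underline k_i\wedge\overline k_i}_{0,t},\text{ a.s.}
\end{align*}
Sending $\eps\downarrow 0$, the correction vanishes in $L^2(\Omega)$ by \eqref{approx-error}, and standard $L^2$-stability of controlled ODEs gives $V^{\eps}\to V$ on compact subspaces; extracting an a.s.\ convergent subsequence yields $u=V$ a.s.\ on $\Lambda^{0,\underline k_i\wedge\overline k_i}_{0,t}$. Letting $i\to\infty$ (so $\underline k_i\wedge\overline k_i\to\infty$) covers $\cup_k\Lambda^{0,k}_{0,t}$, and the a.s.\ continuity of $u,V$ in $x$ (Proposition~\ref{prop-value-func}(v) and $u\in\cS^2(\Lambda^0;\bR)$) extends $u=V$ to all of $\Lambda^0_t$.

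The main obstacle is verifying $V^{\eps}\in\mathscr C^1_{\sF}$: one must glue the conditional-Markovian classical solutions across the break points $t_j$, identify the decomposition in Definition~\ref{defn-testfunc}(i) so that $\mathfrak{d}_\omega V^{\eps}$ corresponds to the partial derivatives in the frozen Wiener arguments, and maintain the piecewise H\"older-in-space bounds of Definition~\ref{defn-testfunc}(iii) away from the break points. A secondary subtlety is the order of limits: the $\eps(1+k)$ growth of the error estimate in Lemma~\ref{lem-approx} forces $\eps\downarrow 0$ to be taken with $k$ fixed before $k\to\infty$.
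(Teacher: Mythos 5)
Your overall strategy -- build classical super- and sub-solutions approximating $V$ from the smoothed coefficients of Lemma~\ref{lem-approx}, correct by a BSDE error term, sandwich any viscosity solution via Proposition~\ref{prop-cor-cmp} on $\Lambda^{0,k}_{0,t}$, then pass $\eps\downarrow 0$ before $k\to\infty$ -- is precisely the paper's plan, and most of the bookkeeping (the correction $Y^\eps_k(t)=E_{\sF_t}[G^\eps_k+\int_t^T(L_V\beta^\eps_k+f^\eps_k)\,ds]$, the inequality chain $|\bH-\bH^\eps|\leq L_V\beta^\eps_k+f^\eps_k$, the order of limits) is correct.

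However, there is a genuine gap exactly at the point you flag as ``the main obstacle'': asserting that ``standard HJB theory should produce $V^\eps\in\mathscr C^1_{\sF}$.'' After smoothing, the controlled system on each interval $(t_j,t_{j+1})$ is Markovian in $(\tilde y,\tilde x)=(W(t),x(t))$ with $C^3$ data, but the state dynamics in $\tilde x$ are still a controlled \emph{ODE} with no diffusion, so the HJB equation \eqref{HJB-N} (without the $\delta$-term) is \emph{degenerate}: uniformly parabolic only in the $\tilde y$-directions, first order in $\tilde x$. For degenerate first-order Hamilton--Jacobi equations the value function is generically only Lipschitz, not $C^{1,2}$; Krylov's interior regularity theorem (\cite[Theorem 6.4.3]{krylov1987nonlinear}) requires uniform parabolicity, which your construction does not provide. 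The paper's crucial extra device is to enlarge the probability space by an independent Brownian motion $B$ and perturb the state equation by $\delta\,dB(t)$, making \eqref{HJB-N} uniformly parabolic in $\tilde x$ as well; this is what yields $V^\eps\in\mathscr C^1_{\sF}$ (after transporting regularity with the It\^o--Kunita formula, \eqref{SHJB-N}). This also forces an additional correction term $\pm\,\delta C_2\,y(t)$, with $y$ a BSDE in $B$, to absorb the $O(\delta)$ mismatch between $\beta^N(t,x,\cdot)$, $f^N(t,x,\cdot)$, $G^N(x)$ and their evaluations at the shifted path $(x-\delta B)$; this term and the limit $\delta\downarrow 0$ (alongside $\eps\downarrow 0$, with $k$ fixed) are absent from your argument. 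Without the parabolic regularization, the comparison step cannot get off the ground because $V^\eps$ is not a legitimate test function, so this is a missing idea rather than a technicality.

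One minor slip: the uniform gradient bound on $V^\eps$ comes with a constant $\tilde L$ depending on the Lipschitz constant $L_c$ of the smoothed coefficients (uniform in $N,k,\eps,\delta$), not with $L_V$ from Proposition~\ref{prop-value-func}(iv); in the paper this $\tilde L$ is then used as the BSDE driver constant and appears in $C_1,C_2$. This does not change the structure but needs care so that the constants remain independent of the approximation parameters.
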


\begin{proof} 
For each $k\in \bN^+$, define 
\begin{align*}
\overline{\mathscr V}_k=
\bigg\{
	\phi\in\mathscr C^1_{\sF}:\, \,\phi(T,x)\geq G(x)\,\,\forall x\in\Lambda_{T}^{0}, \text{ a.s.,  and for each }t\in[0,T) \text{ with }y\in \Lambda_{0,t}^{0,k},
&\\
	\text{ess}\liminf_{(s,x)\rightarrow (t^+,y)}	 \left[ -\mathfrak{d}_{s}\phi(s,x)-\bH(s,x,\nabla \phi(s,x) ) \right]
	\geq 0,	\quad \text{a.s.}
 &\,\,  \bigg\},\\
\underline{\mathscr V}_k=
\bigg\{
	\phi\in\mathscr C^1_{\sF}:\, \phi(T,x)\leq G(x)\,\,\,\forall x\in\Lambda_{T}^{0}, \text{ a.s., and for each }t\in[0,T) \text{ with }y\in \Lambda_{0,t}^{0,k},
	&\\
		\text{ess}\limsup_{(s,x)\rightarrow (t^+,y)}   \left[ -\mathfrak{d}_{s}\phi(s,x)-\bH(s,x,\nabla\phi(s,x) ) \right] 
		\leq 0,\quad \text{a.s.}
	&\,\,   \bigg\},
\end{align*}
and set
\begin{align*}
\overline{u}_k=\essinf_{\phi_k\in \overline{\mathscr V}_k} \phi_k, \quad 
\underline{u}_k=\esssup_{\phi_k\in \underline{\mathscr V}_k} \phi_k.
\end{align*}
It is easy to see the monotonicity of $\overline {\mathscr V}_k$, $\underline{\mathscr V}_k$, and thus that of $\overline u_k$ and $\underline u_k$, as $k\rightarrow \infty$, and we may define further
$$
\overline u =\lim_{k\rightarrow \infty} \overline u_k,\quad \underline u=\lim_{k\rightarrow \infty} \underline u_k.
$$

 By the comparison principle of Proposition \ref{prop-cor-cmp}, each viscosity solution $u$ satisfies $\underline u\leq u\leq \overline u$ on $\cup_{k=1}^{\infty} \Lambda_{0,T}^{0,k}$ that is dense in $\Lambda_T^0$.  Therefore, it is sufficient to verify $\underline u=V= \overline u$ for the uniqueness of viscosity solution.    The proof will be divided into two steps.

\textbf{Step 1.} We construct functions from $\overline{\mathscr V}_k$ and $\underline{{\mathscr V}}_k$ to dominate the value function $V$ from above and from below respectively. Let $(\Omega',\sF',\{\sF'_t\}_{t\geq 0}, \bP')$ be another complete filtered probability space on which a d-dimensional standard Brownian motion $B=\{B(t)\, :\, t\geq 0\}$ is well defined.  The filtration $\{\sF'_t\}_{t\geq 0}$ is generated by $B$ and augmented by all the $\bP'$-null sets in $\sF'$. Put
$$
(\bar\Omega,\bar\sF,\{\bar\sF_t\}_{t\geq 0},\bar\bP)=
(\Omega\times\Omega',\sF \otimes\sF',\{\sF_t\otimes\sF'_t \}_{t\geq 0},\bP \otimes\bP'),
$$
and denote by $\bar\cU$ the set of all the $U$-valued and $\bar\sF_t$-adapted processes. Then we have two Brownian motions $B$ and $W$ that are independent on $(\bar\Omega,\bar\sF,\{\bar\sF_t\}_{t\geq 0},\bar\bP)$, and all the theory established in previous sections still hold on the enlarged probability space.

Fix an arbitrary $\eps\in(0,1)$ and $k\in \bN^+$, and choose $(G^{\eps}_k,\,f^{\eps}_k,\,\beta^{\eps}_k)$ and $(G^N,f^N,\beta^N)$ as in Lemma \ref{lem-approx}. By the theory of backward SDEs (see \cite{Hu_2002} for instance), let the  pairs  $(Y_k^{\eps},Z_k^{\eps})$ and $(y,z )$   be the unique adapted solutions  to backward SDEs
$$
Y_k^{\eps}(s)=G_k^{\eps}+
	\int_s^T\left(f_k^{\eps}(t)+C_1\beta_k^{\eps}(t)\right)\,dt
		-\int_s^TZ_k^{\eps}(t)\,d W(t),
$$
 and 
 $$
 y(s)=
 \|B_T\|_0+\int_{s}^T \|B_r\|_0 \,dr-\int_s^T z(r)\,dB(r),
 $$
respectively, with the constant $C_1\geq 0$ to be determined later. For each $s\in[0,T)$ and $x_s\in \Lambda_s$,  set
{\small
\begin{align*}
{V}^{\eps}(s,x_s)
=\essinf_{\theta\in\bar\cU} E_{\bar\sF_s} \bigg[
	\int_s^Tf^N\Big( & W({t_1\wedge t}),\ldots, W({t_N\wedge t}),t,X^{s,x_s;\theta,N}(0),X^{s,x_s;\theta,N}({t_1\wedge t}) ,\ldots, 
	\\
	&X^{s,x_s;\theta,N}({t_N\wedge t}), \theta(t)\Big)\,dt\\
		+G^N\Big( &W({t_1}),\ldots, W({t_N}),,X^{s,x_s;\theta,N}(0),X^{s,x_s;\theta,N}({t_1}),\ldots,X^{s,x_s;\theta,N}(t_N)\Big)
				\bigg],
\end{align*}
}
where $X^{s,x_s;\theta,N}(t)$ satisfies the SDE
\begin{equation*}
\left\{
\begin{split}
dX(t)&=\beta^N(W({t_1\wedge t}),\ldots, W({t_N\wedge t}),t,X (0),X ({t_1\wedge t}) ,\ldots,X ({t_N\wedge t}),\theta(t))\, dt 
\\
&\quad 
	+\delta \, dB(t),\,\,\,t\in[s,T]; \\
 X(t)&=x_s(t),\quad t\in[0,s],
\end{split}
\right.
\end{equation*}
with $\delta\in (0,1)$ being an arbitrary constant.

For each $s\in[t_{N-1},T)$, let
$$V^{\eps}(s,x_s)=\tilde V^{\eps}( W(t_1),\ldots, W({t_{N-1}}),{W}(s),s,x(0),\ldots,x(t_{N-1}),x(s))$$ 
with
{\small
\begin{align*}
&\tilde V^{\eps}( W(t_1),\ldots, W({t_{N-1}}),\tilde y,s,x(0),\ldots,x(t_{N-1}),\tilde x) \\
&=\essinf_{\theta\in\bar\cU} E_{\bar\sF_s,  W(s)=\tilde y,x_s(s)=\tilde x} \bigg[
	\int_s^Tf^N\left(  W({t_1}),\ldots, W({t_{N-1}}), W({ t}),t,\ldots,x(t_{N-1}),X^{s,x_s;\theta,N}(t),\theta(t)\right)\,dt\\
	&
		+G^N\left( W({t_1}),\ldots, W({t_N}),x(0),\ldots,x(t_{N-1}),X^{s,x_s;\theta,N}(T)\right)
		\bigg].
\end{align*}
}
Here and in what follows, we just write $x(t_j)=x_s(t_j)$ for $j=0,\ldots,N-1$, as they are deemed to be fixed for $s\in (t_{N-1},T]$.
 By the viscosity solution theory of fully nonlinear parabolic PDEs (see \cite[Theorems I.1 and II.1]{lions-1983} for instance),  the function 
 $$\tilde V^{\eps}( W(t_1),\cdots, W({t_{N-1}}),\tilde y,s,x(0),\cdots,x(t_{N-1}),\tilde x)$$
satisfies the  following HJB equation:
{\small
\begin{equation}\label{HJB-N}
  \left\{\begin{array}{l}
  \begin{split}
  -D_tu(\tilde y, t,\tilde x)=\,& 
  \frac{1}{2} \text{tr}\left(D_{\tilde y \tilde y}u(\tilde y, t,\tilde x)\right) 
  +\frac{\delta^2}{2} \text{tr}\left( D_{\tilde x \tilde x}u(\tilde y, t,\tilde x)\right) \\
  +&\inf_{v\in U} \bigg\{   
		  (\beta^N)'(  W({t_1}),\ldots, W({t_{N-1}}) ,\tilde y,t,  x(0),\ldots,x(t_{N-1}),\tilde x,v)D_{\tilde{x}}u(\tilde y, t,\tilde x)\\
	  &+f^N(  W({t_1}),\ldots, W({t_{N-1}}) ,\tilde y,t,  x(0),\ldots,x(t_{N-1}),\tilde x,v)\bigg\}
                     ;\\
    u(\tilde y, T,\tilde x)=\, &G^N(  W({t_1}),\ldots, W({t_{N-1}}) ,\tilde y,  x(0),\ldots,x(t_{N-1}),\tilde x).
        \end{split}
  \end{array}\right.
\end{equation}
}
Thus, the regularity theory of viscosity solutions (see \cite[Theorem 6.4.3]{krylov1987nonlinear} for instance\footnote{As $U\subset\bR^n$ is a nonempty separable set, it has a denumerable subset $\mathcal K\subset U$ that is dense in $U$, and by the continuity of the coefficients, the essential infimum may be taken over $\mathcal K$. Thus,  we may apply  \cite[Theorem 6.4.3]{krylov1987nonlinear} straightforwardly.}) 
 gives for each $(x(0),\ldots,x(t_{N-1}))\in\bR^{N\times d}$,
\begin{align*}
\tilde V^{\eps}&( W(t_1),\ldots, W({t_{N-1}}),\cdot,\cdot,x(0),\ldots,x(t_{N-1}),\cdot)
\\
&\in 
\cap_{\bar t\in (t_{N-1},T)} L^{\infty}\left(\Omega, \sF_{t_{N-1}}; C^{1+\frac{\bar\alpha}{2},2+\bar\alpha}([t_{N-1},\bar t\, ]\times\bR^{m+d}) \right) ,
\end{align*}
for some $\bar\alpha \in (0,1)$, where the \textit{time-space} H\"older space $C^{1+\frac{\bar\alpha}{2},2+\bar\alpha}([t_{N-1},\bar t]\times\bR^{d+m})$ is defined as usual. Similar arguments may be conducted on time interval $[t_{N-2},t_{N-1}]$ with the previously obtained $V^{\eps}(t_{N-1},x)$ as the terminal value, and recursively on intervals $[t_{N-3},t_{N-2}]$, $\dots$, $[0,t_{1}]$.

On $[t_{N-1},T]$, applying the  It\^o-Kunita formula of \cite[Pages 118-119]{kunita1981some} to $\tilde{V}^{\eps} $ yields that
 {\small
 \begin{equation}\label{SHJB-N}
  \left\{\begin{array}{l}
  \begin{split}
  &-dV^{\eps}(t,(x-\delta B)_t)\\ 
&=  \inf_{v\in U} \bigg\{
  (\beta^N)'( W({t_1}),\ldots, W(t),t,x(0),\ldots,x(t_{N-1})-\delta B(t_{N-1}),x(t)-\delta B(t),v) \nabla V^{\eps}(t,(x-\delta B)_t)\\
&\quad	
  +f^N( W({t_1}),\ldots, W(t),t,x(0),\ldots,x(t_{N-1})-\delta B(t_{N-1}),x(t)-\delta B(t),v)\bigg\}\,dt\\
&\quad
		-D_{\tilde y} \tilde{V}^{\eps}( W({t_1}),\ldots, W(t),t,x(0),\ldots,x(t_{N-1})-\delta B(t_{N-1}),x(t)-\delta B(t)) \,d W(t) \\
&\quad
		+\delta \nabla V^{\eps}(t,(x-\delta B)_t)\,dB(t),
                     \quad t\in [t_{N-1},T) \text{ and }x\in\Lambda_t;\\
    &V^{\eps}(T,x_T)=\, G^N(  W({t_1}),\cdots, W(T),x(0),\cdot,x(t_{N-1})-\delta B(t_{N-1}),x(T)-\delta B(T)), \quad x_T\in\Lambda_T.
    \end{split}
  \end{array}\right.
\end{equation}
}
It follows similarly on intervals $[t_{N-2},t_{N-1})$, $\dots$, $[0,t_{1})$, and finally we have $V^{\eps}(\cdot,\cdot-\delta B({\cdot})) \in \mathscr C^1_{\bar\sF}$.

In view of the approximation in Lemma \ref{lem-approx} and with an analogy to  (iv) in Proposition \ref{prop-value-func}, there exists $\tilde{L}>0$ such that for all $t\in [0,T]$ with $x_t\in\Lambda_t$,
$$
 |\nabla V^{\eps}(t,x_t)|   \
\leq \tilde L,\,\,\,\text{a.s.,}
$$
with $\tilde L$ independent of $\eps$ and $N$. Set $C_1=\tilde L$, and
\begin{align*}
\overline{V}_k^{\eps}(s,x)
&=
	V^{\eps}(s,(x-\delta B)_s)+Y_k^{\eps}(s)+\delta C_2 y(t),\\
\underline{V}_k^{\eps}(s,x)
&=
	V^{\eps}(s,(x-\delta B)_s)-Y_k^{\eps}(s)-\delta C_2 y(t),
\end{align*}
where $C_2=4L_c(\tilde L+1)$ and $L_c$ is the Lipschitz constant in Lemma \ref{lem-approx}.
As the uniform Lipschitz continuity gives
\begin{align*}
|\beta^N(t,x_t,v)-\beta^N(t,(x-\delta B)_t,v)|
+|f^N(t,x_t,v)-f^N(t,(x-\delta B)_t,v)| 
& \leq 2\delta L_c \|B_t\|_0,\\
|G^N(x_T)-G^N((x-\delta B)_T)|
&\leq \delta L_c \|B_T\|_0,
\end{align*}
it holds that for all $(t,x_t)$ with $t\in (t_{N-1},T)$ and $x_t\in \Lambda_{0,t}^{0,k}$, 
\begin{align}
&-\mathfrak{d}_{t}\overline V_k^{\eps}-\bH(\nabla \overline V_k^{\eps})
\nonumber\\
&=
	-\mathfrak{d}_{t}\overline V_k^{\eps}
	-\inf_{v\in U} \bigg\{
  		     (\beta^N)'\nabla \overline V_k^{\eps}
	  +f^N+f_k^{\eps}+ \tilde L  \beta_k^{\eps}
	  +\delta C_2 \|B_t\|_0 \nonumber\\
&
	  \quad\quad
	  +   \left(\beta-\beta^N\right)' \nabla \overline V_k^{\eps}       -\beta_k^{\eps} \tilde L
	  +f-f^N-f_k^{\eps}-\delta C_2 \|B_t\|_0
	   \bigg\}
\nonumber\\
&\geq
	-\mathfrak{d}_{t}\overline V_k^{\eps}
	-\inf_{v\in U} \bigg\{
  		      (\beta^N)'\nabla \overline V_k^{\eps}
	  +f^N+f_k^{\eps}+\beta_k^{\eps} \tilde L 
	  +\delta C_2 \|B_t\|_0
	   \bigg\}
	 \label{est-A4}\\
&=0,\nonumber
\end{align}
where the inputs are omitted for involved functions.
It follows similarly on intervals $(t_{N-2},t_{N-1})$, $\dots$, $[0,t_1)$ that
$
-\mathfrak{d}_{t}\overline V_k^{\eps}-\bH(\nabla \overline V_k^{\eps} ) \geq 0,
$
which together with the obvious relation  $\overline V^{\eps}_k(T)=G^{\eps}_k+G^N+ \delta C_2 \|B_T\|_0 \geq G$  indicates that $\overline V_k^{\eps}\in \overline {\mathscr V}_k$. Analogously, $\underline V_k^{\eps}\in \underline {\mathscr V}_k$.

\textbf{Step 2.}
Taking $k\geq L$, we measure the distance between $\underline V^{\eps}_k$ (resp., $\overline V^{\eps}_k$) and $V$. By the estimates for solutions of backward SDEs (see \cite[Proposition 3.2]{Hu_2002} for instance), we first have 
\begin{align*}
\|Y_k^{\eps}\|_{L^2(\Omega;C([0,T];\bR))} + \|Z_k^{\eps}\|_{L^2(\Omega\times[0,T];\bR^{m})}
&
	\leq  C_3 \left(  \|G_k^{\eps}\|_{L^2(\Omega,\sF_T;\bR)} + \|f_k^{\eps}+\tilde L \beta_k^{\eps}\|_{L^2(\Omega\times[0,T];\bR^{m})}  \right)
\\
&
	\leq  C_3 \eps \cdot (1+\tilde L) (1+k),
\end{align*}
with the constant $C_3$ independent of $N$, $k$, $\delta$, and $\eps$.  


Fix an arbitrary $s\in[0,T)$ with $x_s\in \Lambda_{0,s}^{0,k}$. As $k\geq L$, we have
$$X^{s,x_s;\theta,N,\delta B}(\cdot):=(X^{s,x_s;\theta,N}(\cdot) -\delta B(\cdot)+\delta B(s)) \cdot \textbf{1}_{[s,T]} (\cdot) + x_s(\cdot)\textbf{1}_{[0,s)}(\cdot)  \in \Lambda_{0,T}^{0,k},$$
and for $0\leq s\leq t\leq T$,  it holds that
\begin{align}
&\left|\beta^N(t,X^{s,x_s;\theta,N}_t,\theta(t)) -\beta(t,X^{s,x_s;\theta}_t,\theta(t)) \right|
\notag\\
&\leq
\left|\beta^N(t,X^{s,x_s;\theta,N}_t,\theta(t)) -\beta^N(t,X^{s,x_s;\theta,N,\delta B}_t,\theta(t)) \right|
+    \beta^{\eps}_k(t)
\\
&\quad
+	\left|      \beta(t,X^{s,x_s;\theta,N,\delta B}_t,\theta(t)) -\beta(t,X^{s,x_s;\theta,N}_t,\theta(t)) \right|
\notag\\
&\quad 
	+\left| \beta(t,X^{s,x_s;\theta,N}_t,\theta(t)) -\beta(t,X^{s,x_s;\theta}_t,\theta(t)) \right|
\notag\\
&\leq 2 (L_c+L)  \delta \|B^{s,0}_t\|_{0}  + \beta_k^{\eps}(t) + L \| X^{s,x_s;\theta,N}_t - X^{s,x_s;\theta}_t \|_0, \quad\text{a.s.,}
\label{eq-beta}
\end{align}
where $B^{s,0}_t(r)=B(r)-B(r\wedge s)$ for $0\leq r\leq t$.
  In view of the approximations in Lemma \ref{lem-approx}, using It\^o's formula, Burkholder-Davis-Gundy's inequality, and Gronwall's inequality, we have through standard computations that for any $\theta\in\cU$,
\begin{align*}
&
	E_{\sF_s} \left [  \sup_{s\leq t\leq T} \left|  X^{s,x_s;\theta,N}(t)-X^{s,x_s;\theta}(t)   \right| ^2  \right]
\leq
C_4 \left( \delta^2+  E_{\sF_s}\int_s^T   \left|\beta_k^{\eps}(t)\right|^2\,dt\right) ,
\end{align*}
with $C_4$ being independent of $s,\, x_s,\,\delta,\,N,k$, $\eps$, and $\theta$. This together with similar calculations as in  \eqref{eq-beta} yields
\begin{align*}
&E\left| V^{\eps}(s,x_s)-V(s,x_s)\right|
\\
&	
	\leq
		E\esssup_{\theta\in\bar\cU}
		E_{\sF_s}\bigg[ 
		\int_s^T\Big(  \Big| f^N\left(t,X^{s,x_s;\theta,N}_t,\theta(t)\right) -f\left(t,X^{s,x_s;\theta}_t,\theta(t)\right)\Big| \Big)\,dt
\\
&\quad\quad\quad
		 +\Big| G^N\left(X^{s,x_s;\theta,N}_T\right) -G\left(X^{s,x_s;\theta}_T \right)\Big|
		\bigg] 
\\
&
	\leq  C \left( \delta  + \left\|  \beta_k^{\eps} \right\|_{L^2(\Omega\times[0,T];\bR^d)} + \left\|  f_k^{\eps} \right\|_{L^2(\Omega\times[0,T];\bR)} +\left\|  G_k^{\eps} \right\|_{L^2(\Omega;\bR)}  \right)
\\
&
	\leq C_5 (\eps (1+k)+\delta),
\end{align*}
with the constant $C_5$ being independent of $N$, $\eps$, $k$, $\delta$, and $(s,x_s)$. Furthermore, in view of the definitions of $\overline V^{\eps}_k$ and $\underline V^{\eps}_k$, there exists some constant $C_6$ independent of $\eps,\,\delta,\,k$, and $N$ such that for all $s\in[0,T]$ and $x_s\in \Lambda_{0,s}^{0,k}$, it holds that
\begin{align}
E\left| \overline V^{\eps}(s,x_s)-V(s,x_s)\right|
+E\left| \underline V^{\eps}(s,x_s)-V(s,x_s)\right|
\leq C_6 \left\{\eps(1+k)+\delta \right\}. \label{est-v-esp-v}
\end{align}
Because $V$ is a viscosity solution by Theorem \ref{thm-existence}, there exist two infinite sequence of integers $\{\overline k_n\}_{n\in\bN^+}$ and $\{\underline k_n\}_{n\in\bN^+}$ (see Remark \ref{rmk-defn}) such that 
$\lim_{n\rightarrow \infty} \overline k_n =\lim_{n\rightarrow \infty} \underline k_n=\infty$, 
$\overline V^{\eps}_{\overline k_n} (s,x_s)\geq V(s,x_s) $ a.s. for all $n\in\bN^+$, $s\in[0,T]$, and $x_s\in \Lambda_{0,s}^{0,\overline k_n}$, and $  V(t,x_t) \geq \underline V^{\eps}_{\underline k_n}(t,x_t)$ for all $n\in\bN^+$, $t\in[0,T]$, and $x_t\in \Lambda_{0,t}^{0,\underline k_n}$. These, together with the arbitrariness of $\eps$, $k$, and $\delta$ in \eqref{est-v-esp-v} and the denseness of $\cup_{n=1}^{\infty} \Lambda_{0,T}^{0,\overline k_n}$ and $\cup_{n=1}^{\infty} \Lambda_{0,T}^{0,\underline k_n}$ in $\Lambda_T^0$,  finally imply that $\underline u(t,x_t)=V(t,x_t)= \overline u(t,x_t)$ a.s. for all $t\in[0,T]$, and $x_t\in\Lambda_t^0$.
\end{proof}
%
The above proof is inspired by but different from the conventional Perron's method and its modifications used, for instance, in \cite{buckdahn2015pathwise,ekren2016viscosity-1,qiu2017viscosity}; the key difference is that the random fields $\overline u$ and $\underline u$ are neither extreme points of viscosity semi-solutions nor from approximate extremality of classical semi-solutions, while they are limits of approximate extreme points of certain classes of regular random functions. Besides,  by enlarging the original filtered probability space with an independent Brownian motion $B$,  we have actually constructed the regular approximations of $V$ with a regular perturbation induced by $\delta  B$, which corresponds to approximation to the optimization \eqref{Control-probm}-\eqref{state-proces-contrl} with stochastic controls of Markovian type. Such approximation seems interesting even for the case where all the coefficients $\beta,\,f,$ and $G$ are just deterministic and path-dependent. 

\begin{rmk}
In this work, the filtration $\{\sF_t\}_{t\geq0}$ satisfying the usual conditions is generated by an $m$-dimensional Wiener process $W=\{W(t):t\in[0,\infty)\}$ and augmented by all the $\bP$-null sets in $\sF$. Such a Brownian filtration assumption is preferred and adopted here due to technical reason. On the one hand, in the above proof of the uniqueness, the constructed approximations squeezing the viscosity solution are based on Markovian-type optimal controls and their sufficient regularity estimate is given in Krylov's work (see \cite[Theorems I.1 and II.1]{lions-1983} for instance) where the Brownian filtration is assumed. On the other hand, with the Brownian filtration, we could relate the random test functions in $\mathscr C_{\sF}^1$ to an It\^o process in Definition \ref{defn-testfunc} with the operators $\mathfrak{d}_t$ and $\mathfrak{d}_{\omega}$ defined conveniently, and  the generalized It\^o-Kunita formula and the existence arguments follow smoothly. Even so, we think the filtration assumption could be loosened. Nevertheless, the extension may not be straightforward. For example, working under an arbitrary filtration satisfying the usual conditions, one could replace the stochastic integral representation in \eqref{derivative} with a parameterized martingale $\cM^u(t,x_t)$ that is linear w.r.t. $u$, then one would need to contemplate how to make sense of the composition $\cM^u(t,X^{{\varrho},x_{\varrho};\theta}_t)$ when dealing with the existence, and in the uniqueness part, the associated differential operator needs to be specified in the Markovian approximations which is hard without any pre-specification on the underlying stochastic process.
\end{rmk}

\begin{appendix}
\section{Proof of Theorem \ref{thm-DPP}}\label{appdx-proof}

\begin{proof}[Proof of Theorem \ref{thm-DPP}]
Denote the right hand side by $\overline V(\tau,\xi)$. By Proposition \ref{prop-value-func} (iv) and (v), both $V$ and $\overline V$ are lying in $\cS^{\infty}(\Lambda;\bR)$ and the continuity indicates that it is sufficient to prove Theorem 3.3 when $\tau,\hat\tau$ and $\xi$ are deterministic. 


For each $\eps>0$, by Proposition \ref{prop-value-func} (iv), there exists $\delta=\eps/L_V>0$ such that whenever $\|x_{\hat \tau}-y_{\hat \tau}\|_{0}<\delta$ for $x_{\hat \tau},\,y_{\hat \tau}\in \Lambda_{\hat\tau}$, it holds that
$$
|J(\hat\tau,x_{\hat \tau};\theta)- J(\hat\tau,y_{\hat \tau};\theta)  | + 
|V(\hat\tau,x_{\hat \tau})-V(\hat\tau,y_{\hat \tau})|
\leq \eps \quad \text{a.s., }\forall \, \theta\in\cU.
$$
Arzel$\grave{\text{a}}$-Ascoli theorem indicates the compactness of $\Lambda^{0,L;\xi}_{\tau,\hat\tau}$ in $\Lambda_{\hat\tau}^0$. Thus, $\Lambda^{0,L;\xi}_{\tau,\hat\tau}$ is separable, and there exists a sequence $\{x^j\}_{j\in\bN^+}\subset \Lambda^{0,L;\xi}_{\tau,\hat\tau}$ such that $\cup_{j\in\bN^+} \left( \Lambda^{0,L;\xi}_{\tau,\hat\tau} \cap B_{\delta/3}(x^j)\right)=\Lambda^{0,L;\xi}_{\tau,\hat\tau}$. Set $D_1=B_{\delta/3}(x^1)\cap \Lambda^{0,L;\xi}_{\tau,\hat\tau}$, and 
$$D_j=\left( B_{\delta/3}(x^j)-(\cup_{i=1}^{j-1} B_{\delta/3}(x^i))\right)\cap \Lambda^{0,L;\xi}_{\tau,\hat\tau},\quad \text{for } j>1.$$
 Then $\{D^{j}\}_{j\in\bN^+}$ is a partition of $\Lambda^{0,L;\xi}_{\tau,\hat\tau}$ with diameter diam$(D^j)<\delta$, i.e., $D^j\subset \Lambda^{0,L;\xi}_{\tau,\hat\tau}$, $\cup_{j\in\bN^+}D^j=\Lambda^{0,L;\xi}_{\tau,\hat\tau}$, $D^i\cap D^j=\emptyset$ if $i\neq j$, and for any $x,y\in D^j$, $\|x-y\|_0<\delta$. 
 
Then the rest of the proof is similar to that of  \cite[Theorem 3.4]{qiu2017viscosity}.  For each $j\in\bN^+$, take $\bar x^j\in D^j$, and a straightforward application of Proposition \ref{prop-value-func} (i) leads to some $\theta^j\in\cU$ satisfying 
$$0\leq J(\hat\tau,\bar x^j;\theta^j) -V(\hat\tau,\bar x^j):= \alpha^j\quad \text{a.s., with } E|\alpha^j|<\frac{\eps}{2^j}.$$
Thus, for each $x\in D^j$, it holds that
\begin{align*}
&J(\hat\tau,x;\theta^j) -V(\hat\tau,x)\\
&\leq 
	|J(\hat\tau,x;\theta^j) -J(\hat\tau,\bar x^j;\theta^j)|
	+|J(\hat\tau,\bar x^j;\theta^j)-V(\hat\tau,\bar x^j)|
	+|V(\hat\tau,\bar x^j) -V(\hat\tau,x)|
\\
&\leq 2\,\eps+\alpha^j,\quad \text{a.s.}
\end{align*}

In view of Assumption  $({\mathcal A} 1)$ (iii), we observe that for any $\theta\in\cU$,  $X^{\tau,\xi;\theta}_{\hat\tau}$ is almost surely valued in $\Lambda^{0,L;\xi}_{\tau,\hat\tau}$. For each $\theta\in\cU$, put
\[  \tilde{\theta}(s)=
\begin{cases}
\theta(s), \quad & \text{if }s\in[0,\hat\tau);\\
\sum_{j\in\bN^+} \theta^j(s)1_{D^j}(X^{\tau,\xi;\theta}_{\hat\tau}),\quad &\text{if }s\in [\hat\tau,T].\\
\end{cases}
\]
Then it follows that
\begin{align*}
V(\tau,\xi)
&\leq J(\tau,\xi;\tilde\theta)\\
&=	E_{\sF_{\tau}}\left[
	\int_{\tau}^{\hat \tau}  
		f\left(s,X_s^{\tau,\xi;\theta},\theta(s)\right)\,ds
	+J\left(\hat\tau,X_{\hat\tau}^{\tau,\xi;\theta};\tilde\theta\right)
	\right]\\
&\leq
	E_{\sF_{\tau}}\left[
	\int_{\tau}^{\hat \tau}  
		f\left(s,X_s^{\tau,\xi;\theta},\theta(s)\right)\,ds
		+V\left(\hat\tau,X_{\hat\tau}^{\tau,\xi;\theta}\right)
		+\sum_{j\in\bN^+}\alpha^j
	\right]+2\,\eps,
\end{align*}
where $\{\alpha^j\}$ is independent of the choices of $\theta$. Taking infimums and then expectations on both sides, we arrive at
$EV(\tau,\xi) \leq
E \overline V(\tau,\xi) + 3\,\eps$.
By the arbitrariness of $\eps>0$, we have $E\overline V(\tau,\xi)\geq  EV(\tau,\xi)$, which together with the obvious relation $\overline V(\tau,\xi)\leq V(\tau,\xi)$ yields
that $\overline V(\tau,\xi)=V(\tau,\xi)$ a.s.
\end{proof}

\end{appendix}
\bibliographystyle{siam}

\end{document}